\newtheorem{theorem}{Theorem}[section]
\newtheorem{proposition}[theorem]{Proposition}
\newtheorem{lemma}[theorem]{Lemma}
\newtheorem{corollary}[theorem]{Corollary}
\theoremstyle{remark}
\newtheorem{remark}[theorem]{Remark}
\numberwithin{equation}{section}
\begin{document}

\title[Direct image of parabolic line bundles]{Direct image of parabolic line bundles}

\author[R. Auffarth]{Robert Auffarth}

\address{Departamento de Matem\'aticas, Facultad de
Ciencias, Universidad de Chile, Santiago, Chile}

\email{rfauffar@uchile.cl}

\author[I. Biswas]{Indranil Biswas}

\address{School of Mathematics, Tata Institute of Fundamental Research,
Homi Bhabha Road, Mumbai 400005, India}

\email{indranil@math.tifr.res.in}

\subjclass[2010]{14E20, 14J60, 14L15.}

\keywords{Direct image, Cartan subalgebra, parabolic bundle, equivariant bundle.}

\date{}

\begin{abstract}
Given a vector bundle $E$, on an irreducible projective variety $X$, we give a 
necessary and sufficient criterion for $E$ to be a direct image of a line bundle 
under a surjective \'etale morphism. The criterion in question is the existence of a Cartan 
subalgebra bundle of the endomorphism bundle $\text{End}(E)$. As a corollary, a 
criterion is obtained for $E$ to be the direct image of the structure sheaf under an 
\'etale morphism. The direct image of a parabolic line bundle under any ramified 
covering map has a natural parabolic structure. Given a parabolic vector bundle, we 
give a similar criterion for it to be the direct image of a parabolic line bundle under a 
ramified covering map.
\end{abstract}

\maketitle

\section{Introduction}\label{sec1}

This work was inspired by \cite{DP}, and also by
\cite{Bo1}, \cite{Bo2}. In \cite{DP}, the authors address the following question:
Given a vector bundle $E$ on a smooth projective curve $X$ over a field
of characteristic zero, is there a branched covering of $X$ and a line bundle $L$ on $X$ 
such that $E\otimes L$ is the direct image of the structure sheaf under
the covering map? They answer this question affirmatively.

Let $X$ and $Y$ be smooth projective curves defined over an algebraically closed 
field $k$, where $X$ is irreducible but $Y$ need not be, and let $f\, :\, Y\, 
\longrightarrow\, X$ be a finite separable morphism. Then for any parabolic line 
bundle $L_*$ on $Y$, the direct image $f_*L_*$ has a natural parabolic structure.

Here we address the following questions:

Given a parabolic vector bundle $E_*$ on $X$, when there is a pair $(Y,\, f)$ as
above such that
\begin{enumerate}
\item $E_*$ is isomorphic to the parabolic vector bundle $f_*{\mathcal O}_Y$
(the parabolic structure on ${\mathcal O}_Y$ is the trivial one), and more generally,

\item $E_*$ is isomorphic to the parabolic vector bundle $f_*L_*$, where $L_*$ is a
parabolic line bundle on $Y$?
\end{enumerate}
Under the assumption that the characteristic of $k$ is zero, the first question is answered
in Corollary \ref{cor3} and the second question is answered in Theorem \ref{thm2}. When the
characteristic of $k$ is positive, these results remain valid if the parabolic structure on
$E_*$ is tame; see Section \ref{sc3p}.

To understand the criteria in Corollary \ref{cor3} and Theorem \ref{thm2}
the key step is to consider the \'etale case. More precisely, consider the
following questions:
\begin{enumerate}
\item Given a vector bundle $E$ on $X$, when there is an \'etale covering
$f\, :\, Y\, \longrightarrow\, X$,
and a line bundle $L$ on $Y$, such that $f_*L\,=\, E$.

\item With $X$ and $E$ as above, when there is an \'etale covering
$f\, :\, Y\, \longrightarrow\, X$ such that $f_*{\mathcal O}_Y\,=\, E$.
\end{enumerate}
We prove the following (see Theorem \ref{thm1} and Corollary \ref{cor2}):

\begin{theorem}\label{thm0}
Let $X$ be an irreducible projective variety defined over an algebraically closed field $k$.
Given a vector bundle $E$ on $X$ of rank $d$, the following two are equivalent:
\begin{enumerate}
\item There is an \'etale covering of degree $d$
$$
f\, :\, Y\, \longrightarrow\, X
$$
and a line bundle $L$ on $Y$, such that $f_*L\,=\, E$.

\item There is a subbundle ${\mathcal A}\, \subset\, {\rm End}(E)$ of rank $d$ such that for each
closed point
$x\in\, X$, the subspace ${\mathcal A}_x\, \subset\, {\rm End}(E)_x\,=\, {\rm End}(E_x)$ is a Cartan
subalgebra.
\end{enumerate}
If there is a subbundle ${\mathcal A}\, \subset\, {\rm End}(E)$ as in the second statement,
then $(Y,\, f)$ in the first statement can be so chosen that
${\mathcal A}\,=\, f_*{\mathcal O}_Y$. In that case, the number of connected
components of the scheme $Y$ coincides with $\dim H^0(X,\, {\mathcal A})$.
\end{theorem}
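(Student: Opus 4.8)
The plan is to establish the two implications separately, reading off the two supplementary assertions (that one may take $\mathcal A = f_*\mathcal O_Y$, and the count of connected components) directly from the constructions. Throughout, the passage from fibrewise statements to global ones rests on the elementary principle that a morphism of vector bundles which is injective (resp.\ zero) on every fibre is a subbundle inclusion (resp.\ the zero map). Recall also that for $k = \bar k$ a Cartan subalgebra of $\mathfrak{gl}(E_x)$ is the $d$-dimensional algebra of all endomorphisms diagonal in some basis $v_1,\dots,v_d$ of $E_x$; in particular it contains the identity, is isomorphic as an algebra to $k^d$, and its primitive idempotents are the rank-one projections onto the lines $k v_i$.

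First I would prove (1) $\Rightarrow$ (2). Suppose $E = f_* L$ with $f\colon Y \to X$ finite \'etale of degree $d$ and $L$ a line bundle. Since $f$ is affine, $f_*\mathcal O_Y$ is a locally free sheaf of commutative $\mathcal O_X$-algebras of rank $d$, and $E = f_*L$ is a module over it; the module structure is an $\mathcal O_X$-algebra map $\rho\colon f_*\mathcal O_Y \to \text{End}(E)$. I would check that $\rho$ is fibrewise injective with Cartan image: over a closed point $x$, since $k=\bar k$ and $f$ is \'etale of degree $d$, one has $f^{-1}(x)=\{y_1,\dots,y_d\}$ ($d$ distinct reduced points), $(f_*\mathcal O_Y)\otimes k(x)\cong k^d$, and $E_x \cong \bigoplus_i L_{y_i}$ is a sum of $d$ lines on which $k^d$ acts diagonally. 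Thus $\rho_x$ identifies the fibre with the algebra of endomorphisms diagonal in this decomposition, i.e.\ a Cartan subalgebra of $\mathfrak{gl}(E_x)$. Being fibrewise injective, $\rho$ is a subbundle inclusion, so $\mathcal A := \rho(f_*\mathcal O_Y)\cong f_*\mathcal O_Y$ is a rank-$d$ subbundle of $\text{End}(E)$ with Cartan fibres, establishing (2) and simultaneously the identification $\mathcal A = f_*\mathcal O_Y$.

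For (2) $\Rightarrow$ (1) I would first upgrade $\mathcal A$ to a sheaf of algebras. To see that the multiplication of $\text{End}(E)$ restricts to $\mathcal A$, consider the composite $\mathcal A\otimes\mathcal A \to \text{End}(E)\to \text{End}(E)/\mathcal A$ (the quotient is locally free as $\mathcal A$ is a subbundle); it vanishes on every fibre because each $\mathcal A_x$ is a subalgebra, hence is zero, so multiplication factors through $\mathcal A$. Similarly the identity section lies in $\mathcal A$ fibrewise, hence globally. Thus $\mathcal A$ is a finite locally free commutative $\mathcal O_X$-algebra of rank $d$, and I would set $Y := \underline{\mathrm{Spec}}_{\mathcal O_X}(\mathcal A)$ with structure map $f$, so that $f$ is finite flat of degree $d$ with $f_*\mathcal O_Y = \mathcal A$. \'Etaleness is fibrewise: the fibre algebra $\mathcal A_x = \mathcal A\otimes k(x)$ is a Cartan subalgebra, hence $\cong k^d$, i.e.\ \'etale over $k$, so $f$ is \'etale of degree $d$ (and $Y$ need not be connected). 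Finally, under the equivalence between $\mathcal O_Y$-modules and $\mathcal A$-modules, the $\mathcal A$-module $E$ corresponds to a coherent sheaf $L$ on $Y$ with $f_*L = E$, and it remains to prove $L$ invertible. Locally on $X$, where $\mathcal A\cong \mathcal O_X^{\,d}$ as algebras, the primitive idempotents $e_1,\dots,e_d\in\mathcal A\subset\text{End}(E)$ are orthogonal projections summing to $\mathrm{id}$, giving $E = \bigoplus_i e_iE$; because each $\mathcal A_x$ is a Cartan subalgebra its primitive idempotents are rank-one projections, so each $e_iE$ is a line bundle, which is exactly the invertibility of $L$ on $Y$. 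This yields (1).

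The supplementary count follows from $H^0(Y,\mathcal O_Y) = H^0(X, f_*\mathcal O_Y) = H^0(X,\mathcal A)$: as $f$ is \'etale and $X$ is a reduced projective variety over $\bar k$, the scheme $Y$ is reduced and projective, so $H^0(Y,\mathcal O_Y)$ is a reduced finite-dimensional $k$-algebra, hence $\cong k^m$ with $m$ the number of connected components of $Y$; comparing dimensions gives $\dim H^0(X,\mathcal A) = m$. The main obstacle is the invertibility of $L$: this is precisely the step where the hypothesis that the fibres are \emph{Cartan} subalgebras, rather than arbitrary $d$-dimensional commutative \'etale subalgebras, is indispensable, since only maximality forces every primitive idempotent to have rank one. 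The remaining difficulty is purely the bookkeeping needed to promote the fibrewise facts (injectivity of $\rho$, closure under multiplication, \'etaleness) to sheaf-theoretic statements, which the vanishing/injectivity-on-fibres principle handles uniformly.
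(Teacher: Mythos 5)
Your proof is correct in substance, but it takes a genuinely different route from the paper for the implication $(2)\Rightarrow(1)$. The paper does not form $\underline{\mathrm{Spec}}_{\mathcal O_X}(\mathcal A)$; instead it extracts from each Cartan fibre $\mathcal A_x$ the unordered set of $d$ eigenlines $\{l^x_t\}$ of $E_x$ together with the corresponding eigenfunctionals $\mu^x_t\in\mathcal A_x^{*}$, defines $Y$ as the locus of these functionals inside the total space of $\mathcal A^{*}$ (a spectral-cover construction), takes $L$ to be the tautological eigenline subbundle of $f^{*}E$, and produces the isomorphism $f_*L\cong E$ via the projection formula and the trace map. Your relative-Spec argument is cleaner in two respects: the identity $f_*\mathcal O_Y=\mathcal A$ is tautological rather than requiring the comparison of $\widehat\varphi$ with $\varphi$, and the passage from fibrewise algebra facts to sheaf statements is handled uniformly by the vanishing-on-fibres principle. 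What the paper's construction buys is a concrete geometric description of $Y$ and of $L$ (as eigenvalues and eigenlines), which is then reused verbatim in the equivariant setting of Theorem \ref{thm2} and in Section \ref{se5}. Both arguments hinge on the same essential input, namely that maximality of a Cartan subalgebra forces its $d$ primitive idempotents to be rank-one projections, which is what makes $L$ a line bundle (equivalently, what makes the $d$ eigenlines span $E_x$).

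One imprecision to repair: you assert that \emph{Zariski}-locally on $X$ one has $\mathcal A\cong\mathcal O_X^{\oplus d}$ as algebras, so that the primitive idempotents $e_i$ are defined locally. A finite \'etale algebra of rank $d>1$ is in general only split \'etale-locally (or over the Henselization of a local ring), not Zariski-locally --- think of a connected double cover of a curve. This does not damage the argument, since invertibility of $L$ can be checked \'etale-locally, or more simply pointwise: for each closed $y\in Y$ lying over $x$, the fibre $L\otimes k(y)$ is $e_yE_x$ for the corresponding primitive idempotent $e_y\in\mathcal A_x$, hence is one-dimensional, and a coherent sheaf of constant fibre dimension on the reduced scheme $Y$ is locally free. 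State it that way and the proof is complete.
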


\begin{corollary}\label{c2}
Given $X$ and $E$ as in Theorem \ref{thm0}, the following two are equivalent:
\begin{enumerate}
\item There is an \'etale covering
$$
f\, :\, Y\, \longrightarrow\, X
$$
such that $f_*{\mathcal O}_Y\,=\, E$.

\item There is a fiberwise injective homomorphism $\alpha\, :\, E\, \longrightarrow\, {\rm End}(E)$
such that for each closed point
$x\in\, X$, the subspace $\alpha(E_x)\, \subset\, {\rm End}(E)_x\,=\, {\rm End}(E_x)$ is a Cartan
subalgebra.
\end{enumerate}
When these hold, the number of connected components of
the scheme $Y$ coincides with $\dim H^0(X,\, E)$.
\end{corollary}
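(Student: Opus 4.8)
The plan is to deduce this corollary directly from Theorem \ref{thm0}, the essential observation being that in the present situation the rank-$d$ Cartan subalgebra bundle $\mathcal{A}$ supplied by that theorem can be arranged to be the image $\alpha(E)$, which is isomorphic to $E$ itself. Thus the only genuinely new content beyond Theorem \ref{thm0} is the translation between the datum ``$f_*\mathcal{O}_Y \,=\, E$'' and the datum of a fiberwise-injective $\alpha$ with Cartan image.

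For the implication $(1)\Rightarrow(2)$, suppose $f_*\mathcal{O}_Y\,=\,E$ for an \'etale covering $f\,\colon\, Y\,\longrightarrow\, X$ of degree $d$. Then $E$ carries the structure of a sheaf of commutative $\mathcal{O}_X$-algebras, and I would take $\alpha\,\colon\, E\,\longrightarrow\,{\rm End}(E)$ to be the associated regular representation, sending a local section $e$ to the endomorphism ``multiplication by $e$''. Since the algebra $E$ is unital, $\alpha$ is fiberwise injective. Because $f$ is \'etale and $k$ is algebraically closed, for each closed point $x$ the fiber $E_x\,=\,(f_*\mathcal{O}_Y)_x$ is the split \'etale algebra $\bigoplus_{y\in f^{-1}(x)} k$; writing it in its basis of primitive idempotents, the regular representation identifies $\alpha(E_x)$ with the algebra of diagonal matrices in ${\rm End}(E_x)\,\cong\, M_d(k)$, which is a Cartan subalgebra. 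This yields $(2)$.

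For the converse $(2)\Rightarrow(1)$, I would set $\mathcal{A}\,:=\,\alpha(E)\,\subset\,{\rm End}(E)$. As $\alpha$ is fiberwise injective and each $\alpha(E_x)$ is a Cartan subalgebra of ${\rm End}(E_x)$, hence of dimension $d$, the image $\mathcal{A}$ is a subbundle of rank $d$ that is fiberwise a Cartan subalgebra, and $\alpha\,\colon\, E\,\longrightarrow\,\mathcal{A}$ is an isomorphism of vector bundles. Theorem \ref{thm0} then applies to $\mathcal{A}$ and, by its final assertion, produces an \'etale covering $f\,\colon\, Y\,\longrightarrow\, X$ of degree $d$ with $\mathcal{A}\,=\,f_*\mathcal{O}_Y$. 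Combining this with the isomorphism $\alpha$ gives $E\,\cong\,\mathcal{A}\,=\,f_*\mathcal{O}_Y$, which is $(1)$.

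Finally, for the count of connected components, the isomorphism $\alpha\,\colon\, E\,\longrightarrow\,\mathcal{A}$ gives $\dim H^0(X,\,E)\,=\,\dim H^0(X,\,\mathcal{A})$, and by the last statement of Theorem \ref{thm0} the right-hand side equals the number of connected components of $Y$. The step requiring the most care is the fiberwise verification in $(1)\Rightarrow(2)$ that the regular representation of the fiber algebra lands in a genuine Cartan subalgebra: this is precisely where the hypotheses that $f$ is \'etale and that $k$ is algebraically closed are used, ensuring that each fiber algebra is split and that its image is exactly the diagonal torus rather than merely some maximal abelian subalgebra.
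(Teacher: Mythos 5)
Your proof is correct and follows essentially the same route as the paper, which obtains this corollary by setting $L\,=\,{\mathcal O}_Y$ in Theorem \ref{thm1}: your regular-representation map is exactly the homomorphism $\varphi$ of \eqref{e3} specialized to $L\,=\,{\mathcal O}_Y$ (whose fiberwise Cartan property is Lemma \ref{lem1}), and your converse via ${\mathcal A}\,=\,\alpha(E)$ matches the use of Proposition \ref{prop1} with $V\,=\,F\,=\,E$. The component count via $\dim H^0(X,E)\,=\,\dim H^0(X,{\mathcal A})$ is likewise as in Corollary \ref{cor1}.
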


In Section \ref{se5} we describe, in terms of the above criterion, when an \'etale
cover factors through another given \'etale covering.

\section{Direct image of line bundles by \'etale coverings}

\subsection{Construction of homomorphism of vector bundles from direct image}

Let $k$ be an algebraically closed field. Let $X$ be an irreducible projective variety
defined over $k$. Take any pair $(Y,\, f)$, where $Y$ is a projective scheme and
$$
f\, :\, Y\, \longrightarrow\, X
$$
is an \'etale covering of degree $d$. We do not assume that $Y$ is connected. Take any line bundle
$L$ over $Y$.
The direct image
\begin{equation}\label{e2}
E\,:=\, f_*L\, \longrightarrow\, X
\end{equation}
is a vector bundle of rank $d$. The direct image $f_*{\mathcal O}_Y$
of the structure sheaf will be denoted by
$W$. 

We have the natural homomorphism $E\otimes W \,\longrightarrow\, E$ which on any open set 
$U\,\subseteq\, X$ is induced by the bilinear map 
$$
\mathcal{O}_Y(f^{-1}U) \times L(f^{-1}U) \,\longrightarrow\,
L(f^{-1}U),\;\;\; (y,\,l)\,\longmapsto\, y\cdot l\, ,
$$
from the universal property of tensor-product. This homomorphism defines a homomorphism
\begin{equation}\label{e3}
\varphi\, :\, W\, \longrightarrow\, \text{End}(E)\,=\, E\otimes E^*\, ,
\end{equation}
where $E$ is the vector bundle in \eqref{e2}.

The fibers of $\text{End}(E)$ are reductive Lie algebras isomorphic to $\text{Lie}( \text{GL}(d, 
k))\,=\, {\rm M}(d, k)$ (the $d\times d$ matrices with entries in $k$). We recall that a Lie 
subalgebra $A$ of ${\rm M}(d, k)$ is a Cartan subalgebra if
\begin{itemize}
\item $\dim A\,=\, d$, and

\item there is an element $T\, \in\, \text{GL}(d, k)$ such that the conjugation of
${\rm M}(d, k)$ by $T$ takes $A$ into the space of diagonal matrices.
\end{itemize}

\begin{lemma}\label{lem1}
The homomorphism $\varphi$ in \eqref{e3} satisfies the condition that for every closed point $x\, \in\,
X$, the image $\varphi(x)(W_x)$ is a Cartan subalgebra of ${\rm End}(E)_x\,=\, {\rm End}(E_x)$.
\end{lemma}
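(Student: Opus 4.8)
The plan is to verify the statement fiberwise at an arbitrary closed point $x\,\in\, X$ and to exhibit $\varphi(x)(W_x)$ directly as the algebra of diagonal matrices in a suitable basis. Since $f$ is \'etale of degree $d$ and $k$ is algebraically closed, the scheme-theoretic fiber $f^{-1}(x)$ is reduced and consists of exactly $d$ distinct $k$-points $y_1,\, \ldots,\, y_d$. I would fix such an $x$ at the outset and work throughout with this decomposition of the fiber.

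First I would pass to fibers using base change. Because $f$ is finite, hence affine, the direct image $f_*$ is exact and compatible with restriction to $x$; thus $W_x\,=\,(f_*\mathcal{O}_Y)|_x$ is canonically the ring of functions on $f^{-1}(x)$, namely $\bigoplus_{i=1}^d k$, a product of $d$ copies of the field $k$ (one per point $y_i$), and in particular a commutative semisimple $k$-algebra with $d$ orthogonal idempotents $e_1,\, \ldots,\, e_d$. Likewise $E_x\,=\,(f_*L)|_x\,\cong\, \bigoplus_{i=1}^d L_{y_i}$, with each $L_{y_i}$ one-dimensional since $L$ is a line bundle. The key technical point, which I expect to be the main obstacle, is to justify these identifications as $k$-algebra and $k$-module structures rather than as mere vector spaces; this is precisely where the \'etale hypothesis is used, to rule out nilpotents and to guarantee that the fiber algebra is the split \'etale algebra $k^d$.

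Next I would trace through the definition of $\varphi$ in \eqref{e3}. The homomorphism $\varphi$ is induced by the module action $\mathcal{O}_Y\times L\,\longrightarrow\, L$, and this action is local on $Y$; hence after restricting to the fiber it respects the decomposition $\bigoplus_i L_{y_i}$, with the idempotent $e_i\,\in\, W_x$ acting as the projector onto the summand $L_{y_i}$. Consequently an element $(a_1,\, \ldots,\, a_d)\,\in\, \bigoplus_i k\,=\, W_x$ acts on $E_x$ by scaling the $i$-th summand $L_{y_i}$ by $a_i$. Choosing a nonzero vector in each $L_{y_i}$ as a basis of $E_x$, the operator $\varphi(x)(a_1,\, \ldots,\, a_d)$ becomes literally the diagonal matrix with entries $a_1,\, \ldots,\, a_d$.

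Finally I would conclude. The computation shows that $\varphi(x)(W_x)$ is exactly the space of diagonal matrices in this basis: it has dimension $d$ and is already diagonalized, so the conjugating element $T\,\in\, \text{GL}(d,k)$ in the definition of a Cartan subalgebra can be taken to be the identity. Hence $\varphi(x)(W_x)$ is a Cartan subalgebra of $\text{End}(E_x)\,=\, {\rm M}(d,k)$, as required; as a byproduct, $\varphi(x)$ is an isomorphism onto its image.
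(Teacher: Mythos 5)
Your proof is correct and follows essentially the same route as the paper's: both identify $W_x$ with $k^{\oplus d}$ and $E_x$ with $\bigoplus_i L_{y_i}$ over the $d$ points of the \'etale fiber, observe that each summand of $W_x$ acts by scaling the corresponding line $L_{y_i}$ and annihilating the others, and conclude that $\varphi(x)(W_x)$ is exactly the diagonal algebra in the resulting basis. The extra care you take with base change and the split \'etale algebra structure is a sound elaboration of the same argument, not a different one.
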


\begin{proof}
Fix an ordering of the elements of the set $f^{-1}(x)$ of cardinality $d$. Let
$\{y_1,\, \cdots, \, y_d\}$ be this ordered set $f^{-1}(x)$. Fix a nonzero element
$v_i\,\in\, L_{y_i}$ for each $1\, \leq\, i\, \leq\, d$. Since
$$
E_x\,=\,\bigoplus_{i=1}^d L_{y_i}\, ,
$$
the collection $\{v_1,\, \cdots,\, v_d\}$ defines an ordered basis of the
vector space $E_x$. Similarly, we have
$$
W_x\,=\, \bigoplus_{i=1}^d k_i\, ,
$$
where $k_i$ is a copy of $k$. For any $c\, \in\, k_i\,\subset\,
W_x$, the endomorphism $\varphi(c)\,\in\, \text{End}(E_x)$ sends the basis element
\begin{itemize}
\item $v_i$ to $c\cdot v_i$, and

\item $v_j$, $j\, \not=\, i$, to $0$.
\end{itemize}
Therefore, the image $\varphi(W_x)$ is the space of all diagonal matrices with respect
to the above basis $\{v_1,\, \cdots,\, v_d\}$.
\end{proof}

Since $H^0(Y,\, {\mathcal O}_Y)\,=\, H^0(X, \, f_*{\mathcal O}_Y)$, and the
dimension of $H^0(Y,\, {\mathcal O}_Y)$ coincides with the number of connected
components of the scheme $Y$, Lemma \ref{lem1} has the following corollary:

\begin{corollary}\label{cor1}
The number of connected components of the scheme $Y$ coincides with the
dimension of $H^0(X, \, W)$.
\end{corollary}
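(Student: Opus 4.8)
The plan is to derive the statement from two standard facts, without actually invoking the Cartan-subalgebra content of Lemma \ref{lem1}. First I would use the defining property of the direct image functor: for any sheaf $\mathcal{F}$ on $Y$ and any open $U\,\subseteq\, X$ one has $(f_*\mathcal{F})(U)\,=\, \mathcal{F}(f^{-1}(U))$ by definition, so taking $U\,=\, X$ (whence $f^{-1}(X)\,=\, Y$) gives $H^0(X,\, f_*\mathcal{F})\,=\, H^0(Y,\, \mathcal{F})$. Applying this to $\mathcal{F}\,=\, \mathcal{O}_Y$ and recalling that $W\,=\, f_*\mathcal{O}_Y$ yields the identification
$$
H^0(X,\, W)\,=\, H^0(X,\, f_*\mathcal{O}_Y)\,=\, H^0(Y,\, \mathcal{O}_Y)
$$
of $k$-vector spaces. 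It therefore suffices to compute $\dim_k H^0(Y,\, \mathcal{O}_Y)$.

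For the second step I would use that $f$ is a finite \'etale covering onto the irreducible projective variety $X$. Finiteness makes $Y$ projective over $k$, hence proper, and \'etaleness (flat and unramified) forces $Y$ to be reduced, since \'etale morphisms preserve the reducedness of the base. Writing $Y\,=\, \coprod_{j=1}^{n} Y_j$ for the decomposition into connected components, one has $\mathcal{O}_Y\,=\, \prod_{j=1}^{n}\mathcal{O}_{Y_j}$, and consequently
$$
H^0(Y,\, \mathcal{O}_Y)\,=\, \bigoplus_{j=1}^{n} H^0(Y_j,\, \mathcal{O}_{Y_j}).
$$
Each $Y_j$ is a connected, reduced, proper scheme over the algebraically closed field $k$, so $H^0(Y_j,\, \mathcal{O}_{Y_j})\,=\, k$. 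Combining the two displays gives $\dim_k H^0(X,\, W)\,=\, n$, which is exactly the number of connected components of $Y$.

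The one point requiring care—and the step I would treat as the substantive one—is the equality $H^0(Y_j,\, \mathcal{O}_{Y_j})\,=\, k$. Its two ingredients are supplied precisely by the hypotheses. Properness, coming from $Y$ being finite (hence projective) over the projective $X$, forces every global regular function on the connected piece $Y_j$ to be constant, so that $H^0(Y_j,\, \mathcal{O}_{Y_j})$ is a finite-dimensional $k$-algebra; reducedness, coming from \'etaleness, guarantees this algebra has no nilpotents, so over $k\,=\, \overline{k}$ it is a product of copies of $k$, with one factor per connected component. Since $Y_j$ is connected there is a single factor, giving $H^0(Y_j,\, \mathcal{O}_{Y_j})\,=\, k$. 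With this in hand the corollary is immediate, and I would present it in essentially the two paragraphs above.
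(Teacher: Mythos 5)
Your proof is correct and follows essentially the same route as the paper, which simply invokes the identification $H^0(X,\,f_*\mathcal{O}_Y)\,=\,H^0(Y,\,\mathcal{O}_Y)$ together with the standard fact that $\dim H^0(Y,\,\mathcal{O}_Y)$ counts the connected components of a reduced proper scheme over $k\,=\,\overline{k}$. The extra detail you supply (properness from finiteness over the projective $X$, reducedness from \'etaleness) is a fair expansion of what the paper leaves implicit.
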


\subsection{Criterion for direct images under \'etale maps}

Let $F$ and $V$ be vector bundles on $X$ of same rank $d$. Let
\begin{equation}\label{v2}
\varphi\, :\, V\, \longrightarrow\, \text{End}(F)\,=\, F\otimes F^*
\end{equation}
be an ${\mathcal O}_X$--linear homomorphism.

\begin{proposition}\label{prop1}
Assume that $\varphi$ in \eqref{v2} satisfies the condition that for every closed
point $x\, \in\, X$, the image $\varphi(V_x)$ is a Cartan subalgebra of the Lie algebra
${\rm End}(F)_x\,=\, {\rm End}(F_x)$. Then there is an \'etale covering
$$
f\, :\, Y\, \longrightarrow\, X
$$
and a line bundle $L$ over $Y$, such that
$$
f_*L\,=\, F \ \ \ \textit{ and } \ \ \ f_*{\mathcal O}_Y \,=\, V\, .
$$
Furthermore, the homomorphism $\varphi$ coincides with the homomorphism in \eqref{e3}
corresponding to the above triple $(Y,\, f,\, L)$.
\end{proposition}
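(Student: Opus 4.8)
The plan is to reverse the construction of Lemma~\ref{lem1}: from the bundle $V$ together with the map $\varphi$ I must manufacture the covering $Y$, and the key observation is that $\varphi(V_x)$ being a Cartan subalgebra means it is an abelian Lie subalgebra of $\mathrm{M}(d,k)$ whose elements are simultaneously diagonalizable, i.e. a maximal torus' worth of commuting semisimple operators. My first step would be to show that $V$ itself carries the structure of a sheaf of commutative ${\mathcal O}_X$-algebras. The homomorphism $\varphi$ identifies $V$ with the subbundle ${\mathcal A}\,:=\,\varphi(V)\,\subset\,\mathrm{End}(F)$ (injectivity follows fiberwise from $\dim\varphi(V_x)\,=\,d\,=\,\mathrm{rank}\,V$), and since each fiber ${\mathcal A}_x$ is a Cartan subalgebra it is closed under composition of endomorphisms and contains the identity. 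Thus the composition operation in $\mathrm{End}(F)$ restricts to an ${\mathcal O}_X$-bilinear, commutative, associative, unital multiplication $V\otimes V\,\longrightarrow\,V$, making ${\mathcal V}\,:=\,V$ a sheaf of commutative ${\mathcal O}_X$-algebras, locally free of rank $d$.

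Having produced this algebra structure, I would set
$$
Y\,:=\,\underline{\mathrm{Spec}}_X({\mathcal V})\, ,
$$
the relative spectrum, with its structural morphism $f\,:\,Y\,\longrightarrow\,X$; by construction $f_*{\mathcal O}_Y\,=\,{\mathcal V}\,=\,V$ as ${\mathcal O}_X$-algebras, and $f$ is finite of degree $d$. The next step is to verify that $f$ is \'etale. This is exactly where the Cartan condition does the real work: at each closed point $x$, the fiber algebra ${\mathcal V}_x\,=\,{\mathcal A}_x$ is a Cartan subalgebra of $\mathrm{M}(d,k)$, hence (after the diagonalizing conjugation $T$) equals the algebra of diagonal matrices, which is isomorphic to $k^d\,=\,k\times\cdots\times k$, a product of $d$ copies of the field $k$. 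An ${\mathcal O}_X$-algebra that is locally free of rank $d$ and whose fiber over every closed point is the reduced \'etale $k$-algebra $k^d$ is precisely a finite \'etale covering (the relative cotangent sheaf vanishes fiberwise, so $f$ is unramified, and being flat and unramified it is \'etale). I would phrase this via the trace form or the fiberwise-reducedness criterion; either way the diagonalizability guaranteed by the Cartan hypothesis is what forces separability.

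It remains to produce the line bundle $L$ and identify $f_*L$ with $F$ together with the homomorphism. The bundle $F$ is naturally a module over the sheaf of algebras ${\mathcal V}\,=\,f_*{\mathcal O}_Y$, since ${\mathcal A}\,\subset\,\mathrm{End}(F)$ acts on $F$; by the equivalence between quasi-coherent $f_*{\mathcal O}_Y$-modules and quasi-coherent ${\mathcal O}_Y$-modules I may regard $F$ as a coherent sheaf $L$ on $Y$ with $f_*L\,=\,F$. The remaining point is that $L$ is a line bundle, i.e. locally free of rank one on $Y$. Over a point $y_i\,\in\,f^{-1}(x)$ the corresponding idempotent in ${\mathcal V}_x\,=\,k^d$ acts on $F_x$ as the $i$-th diagonal projector, whose image is one-dimensional; hence the fiber of $L$ at each point of $Y$ is one-dimensional, and $L$ is a line bundle by Nakayama together with the decomposition $F_x\,=\,\bigoplus_i L_{y_i}$ into the idempotent eigenspaces.

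Finally, that the $\varphi$ we started with coincides with the homomorphism of \eqref{e3} attached to $(Y,\,f,\,L)$ is a matter of unwinding definitions: the map \eqref{e3} sends a local section of $f_*{\mathcal O}_Y\,=\,{\mathcal V}$ to the endomorphism of $F\,=\,f_*L$ given by multiplication, which is exactly the inclusion ${\mathcal V}\,=\,V\,\hookrightarrow\,\mathrm{End}(F)$ recovered from $\varphi$. I expect the main obstacle to be the \'etale verification: one must argue carefully that the fiberwise Cartan/diagonalizability condition, which is a statement about the \emph{image} $\varphi(V_x)$ in each fiber, globalizes to an honest unramifiedness statement for $f$, and in particular that the algebra multiplication really is well-defined and $f_*{\mathcal O}_Y$-linear across all of $X$ rather than merely pointwise. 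The passage from ``diagonalizable in each fiber'' to ``reduced fiber algebra $k^d$'', and thence to flatness-plus-unramified, is the technical heart of the argument.
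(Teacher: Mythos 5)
Your proposal is correct, but it takes a genuinely different route from the paper. The paper constructs $Y$ explicitly as the locus, inside the total space ${\mathbb V}^*$ of $V^*$, of the eigencharacters $\mu^x_t$ attached to the $d$ eigenlines $l^x_t$ of the Cartan subalgebra $\varphi(V_x)$; it proves \'etaleness by observing that these lines can be ordered uniformly over suitable \'etale neighbourhoods, takes $L$ to be the eigenline subbundle of $f^*F$, and then has to check separately (via the projection formula and the trace map) that the induced map $\eta\,:\,f_*L\,\longrightarrow\,F$ is an isomorphism and that $V\,\cong\,f_*{\mathcal O}_Y$. You instead endow $V\,\cong\,\varphi(V)$ with the structure of a sheaf of commutative unital ${\mathcal O}_X$-algebras, take $Y\,=\,\underline{\mathrm{Spec}}_X(V)$, prove \'etaleness from the reducedness of the fibre algebras $k^d$, and obtain $L$ with $f_*L\,=\,F$ and $f_*{\mathcal O}_Y\,=\,V$ essentially for free from the equivalence between $f_*{\mathcal O}_Y$-modules and ${\mathcal O}_Y$-modules. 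The two constructions produce the same covering (the $k$-points of $\underline{\mathrm{Spec}}_X(V)$ over $x$ are exactly the characters $\mu^x_t$), but yours is more intrinsic and avoids having to justify that the ``locus'' of the $\mu^x_t$ is a subscheme, while the paper's is more geometric and makes the functionals $\mu^x_t$ explicit, which the authors reuse in Section~\ref{se5}. Two small points you should make explicit to close your own flagged worry: since $\varphi$ is fiberwise injective, $\varphi(V)$ is a genuine subbundle with locally free quotient, so a section of $\mathrm{End}(F)$ whose values lie fiberwise in $\varphi(V)$ (such as a composition $s\circ t$, a commutator, or the identity) is an honest section of $\varphi(V)$ because $X$ is reduced; and the passage from ``one-dimensional fibres of $L$'' to ``$L$ is a line bundle'' uses that $Y$ is reduced, which holds because $Y$ is \'etale over the variety $X$.
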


\begin{proof}
For any closed point $x\, \in\, X$, consider the Cartan subalgebra
$\varphi(V_x)\, \subset\, \text{End}(F_x)$. It produces an \textit{unordered} set of $d$
lines in $F_x$
\begin{equation}\label{uo}
\{l^x_t\}_{t\in B_x}\, , \ \ \ \# B_x\,=\, d
\end{equation}
such that
\begin{itemize}
\item the $d$ lines $\{l^x_t\}_{t\in B_x}$ together generate the fiber $F_x$, and

\item for each $t\, \in\, B_x$,
there is a unique functional $\mu^x_t\, \in\, V^*_x$ with the property that for
all $v\, \in\, V_x$, we have
$$
\varphi(v)(w)\,=\, \begin{cases} \mu^x_t(v)\cdot w ~ \text{ if }~ w\,\in\, l^x_t\\
0 ~ \text{ if }~ w\,\in\, l^x_s,\, s\,\not=\, t\, .
\end{cases}
$$
\end{itemize}
In \eqref{uo} we use the notation $\{l^x_t\}_{t\in B_x}$ instead of $\{l^x_1,\, \cdots, \, 
l^x_d\}$ in order to emphasize that in general these $d$ lines do not have any 
ordering which can be chosen uniformly over $X$. We note that the $d$ elements 
$\{\mu^x_t\}_{t\in B_x}$ of $V^*_x$ are distinct. In fact, $\{\mu^x_t\}_{t\in B_x}$ 
is a basis of the dual vector space $V^*_x$.

The quasiprojective variety defined by the total space of the dual vector
bundle $V^*$ will be denoted by ${\mathbb V}^*$. The locus in ${\mathbb V}^*$ of the above
collection $\{\mu^x_t\}_{t\in B_x, x\in X}$ defines a reduced
subscheme $Y\, \subset\, {\mathbb V}^*$. Let
\begin{equation}\label{Y}
f\, :\, Y\, \longrightarrow\, X
\end{equation}
be the restriction to $Y$ of the natural projection ${\mathbb 
V}^*\,\longrightarrow\, X$. This map $f$ defines an \'etale covering of $X$ of 
degree $d$ because the $d$ lines $\{l^x_t\}_{t\in B_x}$ in \eqref{uo} can be uniformly
ordered over suitable \'etale open subsets of $X$.

Consider the pulled back vector bundle $f^*F\, \longrightarrow\, Y$. It has a line
subbundle $L$ whose fiber over any closed point $\mu^x_t\, \in\, Y$ is the line
$l^x_t$ contained in $F_x$.

The projection formula gives that $f_*f^*F\,=\, F\otimes f_*{\mathcal O}_Y$. This
and the trace homomorphism $f_*{\mathcal O}_Y\, \longrightarrow\,
f_*{\mathcal O}_X$ together give a homomorphism $f_*f^*F\,\, \longrightarrow\, F$. 
This homomorphism and the inclusion map $L\, \hookrightarrow\, f^*F$
together produce a homomorphism
\begin{equation}\label{eta}
\eta\, :\, f_*L \, \longrightarrow\, F\, .
\end{equation}
This $\eta$ is an isomorphism because the $d$ lines $\{l^x_t\}_{t\in B_x}$ in 
\eqref{uo} generate $F_x$.

Now, as constructed in \eqref{e3}, we have a fiberwise injective homomorphism
$$
\widehat{\varphi}\, :\, f_*{\mathcal O}_Y \, \longrightarrow\,
\text{End}(f_*L)\,=\, \text{End}(F)\, .
$$
It is straight-forward to check that the image of $\widehat{\varphi}$ coincides with
the image of $\varphi$. Hence the composition
$(\widehat{\varphi}^{-1}\vert_{\varphi(V)})\circ\varphi$
is an isomorphism from $V$ to $f_*{\mathcal O}_Y$.

In terms of the above isomorphisms $V\, \stackrel{\sim}{\longrightarrow}\, 
f_*{\mathcal O}_Y$ and $\eta$ in \eqref{eta}, the homomorphism $\varphi$ in the statement of
the proposition coincides with the homomorphism in \eqref{e3}.
\end{proof}

Note that the condition in Proposition \ref{prop1} that $\varphi(V_x)$ is a Cartan 
subalgebra of ${\rm End}(F_x)$ for every $x\,\in\, X$ implies that the homomorphism 
$\varphi$ is fiberwise injective.

Combining Lemma \ref{lem1}, Corollary \ref{cor1} and Proposition \ref{prop1} we have the following:

\begin{theorem}\label{thm1}
Given a vector bundle $E$ on $X$ of rank $d$, the following two are equivalent:
\begin{enumerate}
\item There is an \'etale covering
$$
f\, :\, Y\, \longrightarrow\, X\, ,
$$
where $Y$ is a projective scheme, and a line bundle $L$ on $Y$, such that $f_*L\,=\, E$.

\item There is a subbundle ${\mathcal A}\, \subset\, {\rm End}(E)$ of rank $d$ such
that for each closed point
$x\in\, X$, the subspace ${\mathcal A}_x\, \subset\, {\rm End}(E)_x\,=\,
{\rm End}(E_x)$ is a Cartan
subalgebra.
\end{enumerate}
If there is a subbundle ${\mathcal A}\, \subset\, {\rm End}(E)$ as in the second statement,
then $(Y,\, f)$ in the first statement can be so chosen that
${\mathcal A}\,=\, f_*{\mathcal O}_Y$. In that case, the number of connected
components of $Y$ coincides with $\dim H^0(X,\, {\mathcal A})$.
\end{theorem}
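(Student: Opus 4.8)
The plan is to assemble the theorem from the three results already in hand, treating the two implications separately and then reading off the count of connected components at the end. The essential content lives in Lemma \ref{lem1} and Proposition \ref{prop1}, so the work here is mostly bookkeeping: matching up the data $(Y,\,f,\,L)$ with the subbundle $\mathcal{A}$ and checking that the two constructions are mutually inverse.

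For the implication $(1) \Rightarrow (2)$, I would start from a given pair $(Y,\,f)$ together with a line bundle $L$ on $Y$ satisfying $f_*L = E$. The construction in \eqref{e3} attaches to this data the homomorphism $\varphi\colon W \longrightarrow \text{End}(E)$, where $W = f_*\mathcal{O}_Y$. Lemma \ref{lem1} guarantees that $\varphi(x)(W_x)$ is a Cartan subalgebra of $\text{End}(E_x)$ at every closed point $x$, and the observation following Proposition \ref{prop1} records that this Cartan condition forces $\varphi$ to be fiberwise injective. The single point that needs a line of justification is that the image $\mathcal{A} := \varphi(W)$ is an honest subbundle rather than merely a subsheaf; this is exactly the standard fact that a fiberwise injective homomorphism of vector bundles has locally free cokernel, so that $\varphi(W)$ is a subbundle of the expected rank $d$. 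Thus $\mathcal{A} \subset \text{End}(E)$ is a rank-$d$ subbundle with Cartan fibers, and since $\varphi$ identifies $W$ with $\mathcal{A}$, we obtain the refinement $\mathcal{A} \cong f_*\mathcal{O}_Y$ claimed at the end of the statement.

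For the reverse implication $(2) \Rightarrow (1)$, I would invoke Proposition \ref{prop1} directly, taking $F = E$, $V = \mathcal{A}$, and letting $\varphi$ be the inclusion $\mathcal{A} \hookrightarrow \text{End}(E)$. By hypothesis this inclusion has Cartan image $\mathcal{A}_x$ in every fiber, so the hypothesis of Proposition \ref{prop1} is met verbatim. The proposition then produces an étale covering $f\colon Y \longrightarrow X$ and a line bundle $L$ on $Y$ with $f_*L = E$ and $f_*\mathcal{O}_Y = \mathcal{A}$; since $f$ is finite and $X$ is projective, $Y$ is a projective scheme, so this delivers both the existence asserted in (1) and the promised equality $\mathcal{A} = f_*\mathcal{O}_Y$.

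Finally, the count of connected components is immediate from Corollary \ref{cor1}: the number of components of $Y$ equals $\dim H^0(X,\, f_*\mathcal{O}_Y)$, and under either construction we have identified $f_*\mathcal{O}_Y$ with $\mathcal{A}$ (via $\varphi$ in the first case, by the equality in the second), whence this number equals $\dim H^0(X,\, \mathcal{A})$. I do not expect any genuine obstacle: the only care required is the remark that the image of a fiberwise injective bundle map is a subbundle, and the verification that the passage $(Y,\,f,\,L) \rightsquigarrow \varphi$ of \eqref{e3} and the passage $\mathcal{A} \rightsquigarrow (Y,\,f,\,L)$ of Proposition \ref{prop1} are inverse to one another, so that $\mathcal{A}$ and $f_*\mathcal{O}_Y$ may be genuinely identified rather than merely declared abstractly isomorphic.
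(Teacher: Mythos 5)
Your proposal is correct and follows the paper's own route exactly: the paper derives the theorem by combining Lemma \ref{lem1} and Corollary \ref{cor1} (for the forward implication and the component count) with Proposition \ref{prop1} (for the converse and the identification ${\mathcal A}\,=\, f_*{\mathcal O}_Y$), which is precisely the assembly you describe. The one detail you rightly flag --- that the fiberwise injectivity forced by the Cartan condition makes $\varphi(W)$ a genuine rank-$d$ subbundle --- is the same observation the paper records immediately after Proposition \ref{prop1}.
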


\begin{corollary}\label{cor4}
A vector bundle $F$ on $X$ of rank $d$ splits into a direct sum of $d$ line bundles
if and only if there is a trivial subbundle of rank $d$
$$
\iota\, :\, {\mathcal O}_X^{\oplus d}\, \hookrightarrow\, {\rm End}(F)
$$
such that for each closed point
$x\in\, X$, the subspace $${\rm image}(\iota(x))\, \subset\, {\rm End}(F)_x\,=\,
{\rm End}(F_x)$$ is a Cartan subalgebra.
\end{corollary}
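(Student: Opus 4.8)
The plan is to deduce this directly from Theorem \ref{thm1}, specializing to the case where the Cartan subalgebra bundle $\mathcal{A}$ is the trivial bundle $\mathcal{O}_X^{\oplus d}$. The guiding principle is that $F$ splits into a sum of $d$ line bundles exactly when the associated \'etale covering $f\,:\,Y\,\longrightarrow\,X$ is totally split, meaning $Y$ is a disjoint union of $d$ copies of $X$; and this total splitting should be detected by the triviality of $f_*\mathcal{O}_Y\,=\,\mathcal{A}$, equivalently by $Y$ having the maximal possible number $d$ of connected components.

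For the forward implication, suppose $F\,=\,\bigoplus_{i=1}^d L_i$ with each $L_i$ a line bundle. I would take $Y\,=\,\bigsqcup_{i=1}^d X$ with $f$ the identity on each copy, and $L$ the line bundle restricting to $L_i$ on the $i$-th copy; then $f_*L\,=\,F$ and $f_*\mathcal{O}_Y\,=\,\mathcal{O}_X^{\oplus d}$. Applying the construction of \eqref{e3} to this triple and invoking Lemma \ref{lem1}, the resulting homomorphism $\varphi\,:\,\mathcal{O}_X^{\oplus d}\,\longrightarrow\,\text{End}(F)$ is fiberwise Cartan, and it is precisely the required trivial subbundle $\iota$. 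Concretely, $\iota$ sends the $i$-th standard section to the projection endomorphism $p_i$ of $F$ onto $L_i$, and at each closed point $x$ the span of the $p_i(x)$ is the diagonal subalgebra with respect to $F_x\,=\,\bigoplus_i (L_i)_x$, which is a Cartan subalgebra.

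For the reverse implication, given a trivial subbundle $\iota\,:\,\mathcal{O}_X^{\oplus d}\,\hookrightarrow\,\text{End}(F)$ that is fiberwise Cartan, I would set $\mathcal{A}\,=\,\text{image}(\iota)\,\cong\,\mathcal{O}_X^{\oplus d}$ and apply Theorem \ref{thm1}. This yields an \'etale covering $f\,:\,Y\,\longrightarrow\,X$ of degree $d$ and a line bundle $L$ on $Y$ with $f_*L\,=\,F$ and $f_*\mathcal{O}_Y\,=\,\mathcal{A}$. The crucial computation is then $\dim H^0(X,\,\mathcal{A})\,=\,d\cdot\dim H^0(X,\,\mathcal{O}_X)\,=\,d$, using that $X$ is an irreducible projective variety over an algebraically closed field, so that $H^0(X,\,\mathcal{O}_X)\,=\,k$. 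By the last assertion of Theorem \ref{thm1} (equivalently Corollary \ref{cor1}), the scheme $Y$ then has exactly $d$ connected components. Since each such component is clopen in $Y$, its image under the surjective finite map $f$ is clopen in the connected base $X$, hence all of $X$, so each component is finite \'etale over $X$ of some positive degree with these degrees summing to $d$; there being $d$ components forces every degree to equal $1$, whence each component maps isomorphically onto $X$. Pushing $L$ forward componentwise then exhibits $F\,=\,f_*L$ as a direct sum of $d$ line bundles.

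The step I expect to be the main point is the counting argument in the reverse direction: converting the triviality of $\mathcal{A}$ into the statement that $Y$ is a disjoint union of $d$ copies of $X$. This rests on two facts that must be handled with care, namely that $H^0(X,\,\mathcal{O}_X)\,=\,k$ (so that the maximal component count $d$ is attained exactly when $\mathcal{A}$ is trivial), and that a degree-one finite \'etale morphism from a clopen piece of $Y$ onto the connected base $X$ is an isomorphism. Everything else is a formal specialization of the machinery already established.
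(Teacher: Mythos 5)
Your proposal is correct, and the forward direction is essentially the paper's (the diagonal embedding $\bigoplus_{i=1}^d {\rm End}(L_i)\hookrightarrow {\rm End}(F)$, which you additionally identify with the construction of \eqref{e3} for the split covering $Y=\bigsqcup_{i=1}^d X$). The converse, however, is argued by a genuinely different route. The paper stays inside the construction of Proposition \ref{prop1}: triviality of $\mathcal A$ means the unordered set of lines in \eqref{uo} (equivalently, the functionals $\mu^x_t$ sitting in the trivial bundle ${\mathbb V}^*$) acquires a uniform ordering over all of $X$, so the covering $Y$ of \eqref{Y} is by construction a disjoint union of $d$ copies of $X$ and $f_*L$ is visibly a sum of line bundles. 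You instead invoke only the \emph{statement} of Theorem \ref{thm1} together with the component count: $\dim H^0(X,\mathcal A)=d$ forces $Y$ to have $d$ connected components, and since each component is finite \'etale onto the connected base of positive degree with degrees summing to $d$, each has degree one and is therefore isomorphic to $X$. Both arguments are valid. Your version is more self-contained relative to the stated results (it does not reopen the proof of Proposition \ref{prop1}) and makes explicit where irreducibility and projectivity of $X$ enter, namely through $H^0(X,{\mathcal O}_X)=k$ and through the fact that a degree-one finite \'etale map is an isomorphism; the paper's version is shorter and more constructive, reading the splitting directly off the geometry of $Y\subset{\mathbb V}^*$, whose terse justification (that a trivialization of $V$ orders the functionals globally) your counting argument effectively replaces.
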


\begin{proof}
If $F\, =\, \bigoplus_{i=1}^d L_i$, then the homomorphism
$$
{\mathcal O}_X^{\oplus d}\,=\, \bigoplus_{i=1}^d {\rm End}(L_i)
\, \hookrightarrow\,{\rm End}(F)
$$
satisfies the condition in the statement of the corollary.

To prove the converse, assume that there is a trivial subbundle of rank $d$
$$
\iota\, :\, {\mathcal O}_X^{\oplus d}\, \hookrightarrow\, {\rm End}(F)
$$
satisfying the condition in the corollary. Now the unordered set in \eqref{uo} becomes
uniformed ordered over entire $X$. Therefore, the covering $Y$ in \eqref{Y} becomes
a disjoint union of $d$ copies of $X$. Consequently, the vector bundle $f_*L$ in
\eqref{eta} is a direct sum of line bundles. Since $\eta$ in \eqref{eta} is an
isomorphism, the vector bundle $F$ splits into a direct sum of $d$ line bundles.
\end{proof}

Setting $L\,=\, {\mathcal O}_Y$ in Theorem \ref{thm1} we have the following:

\begin{corollary}\label{cor2}
Given a vector bundle $E$ on $X$, the following two are equivalent:
\begin{enumerate}
\item There is an \'etale covering
$$
f\, :\, Y\, \longrightarrow\, X
$$
such that $f_*{\mathcal O}_Y\,=\, E$.

\item There is a fiberwise injective homomorphism $\alpha\, :\, E\, \longrightarrow\, {\rm End}(E)$
such that for each closed point
$x\in\, X$, the subspace $\alpha(E_x)\, \subset\, {\rm End}(E)_x\,=\, {\rm End}(E_x)$ is a Cartan
subalgebra.
\end{enumerate}
When these statements hold, the number of connected components of the scheme $Y$
coincides with $\dim H^0(X,\, E)$.
\end{corollary}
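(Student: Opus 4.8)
The plan is to derive both implications directly from Theorem \ref{thm1} by specializing to the line bundle $L\,=\,{\mathcal O}_Y$. The guiding observation is that in this case the two vector bundles of the previous subsection collapse into one: one has $E\,=\,f_*L\,=\,f_*{\mathcal O}_Y\,=\,W$, so the homomorphism $\varphi$ of \eqref{e3}, whose source is $W$, becomes a homomorphism from $E$ to $\text{End}(E)$. This is exactly the shape of the map $\alpha$ in the statement, and it is what makes $L\,=\,{\mathcal O}_Y$ the natural specialization.

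For the implication $(1)\,\Rightarrow\,(2)$, I would start from an \'etale covering $f$ with $f_*{\mathcal O}_Y\,=\,E$ and feed the triple $(Y,\,f,\,{\mathcal O}_Y)$ into the construction preceding Lemma \ref{lem1}. Setting $\alpha\,:=\,\varphi$, Lemma \ref{lem1} immediately gives that $\alpha(E_x)$ is a Cartan subalgebra of $\text{End}(E_x)$ for every closed point $x$. Fiberwise injectivity of $\alpha$ then comes for free: a Cartan subalgebra of $\text{End}(E_x)$ has dimension $d\,=\,\dim E_x$, so $\alpha$ restricted to each fiber is a surjection between vector spaces of equal dimension, hence an isomorphism onto its image. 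This is precisely the remark recorded after Proposition \ref{prop1}.

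For the converse $(2)\,\Rightarrow\,(1)$, I would set ${\mathcal A}\,:=\,\alpha(E)\,\subset\,\text{End}(E)$. Since $\alpha$ is fiberwise injective, ${\mathcal A}$ is a subbundle of rank $d$, and by hypothesis each fiber ${\mathcal A}_x\,=\,\alpha(E_x)$ is a Cartan subalgebra; thus the second condition of Theorem \ref{thm1} is satisfied. Theorem \ref{thm1} then yields a pair $(Y,\,f)$ with ${\mathcal A}\,=\,f_*{\mathcal O}_Y$. Because $\alpha$ is a fiberwise isomorphism onto its image, it is an isomorphism of vector bundles $E\,\stackrel{\sim}{\longrightarrow}\,{\mathcal A}$, whence $f_*{\mathcal O}_Y\,=\,{\mathcal A}\,\cong\,E$, which is statement (1). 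The count of connected components transfers in the same way: Theorem \ref{thm1} gives that the number of components of $Y$ equals $\dim H^0(X,\,{\mathcal A})$, and the isomorphism ${\mathcal A}\,\cong\,E$ turns this into $\dim H^0(X,\,E)$.

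I do not expect a serious obstacle, since the result is essentially a reading of Theorem \ref{thm1} in the special case $L\,=\,{\mathcal O}_Y$. The only point requiring a little care is the identification $E\,\cong\,{\mathcal A}$, which is what lets one promote the conclusion $f_*{\mathcal O}_Y\,=\,{\mathcal A}$ of Theorem \ref{thm1} to the desired $f_*{\mathcal O}_Y\,=\,E$.
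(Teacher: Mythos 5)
Your proposal is correct and follows exactly the paper's route: the paper derives Corollary \ref{cor2} by setting $L={\mathcal O}_Y$ in Theorem \ref{thm1}, which is precisely your specialization, with the forward direction via Lemma \ref{lem1} and the converse via ${\mathcal A}:=\alpha(E)$. Your added care about identifying $E$ with ${\mathcal A}$ through the fiberwise isomorphism $\alpha$ is a correct and harmless elaboration of what the paper leaves implicit.
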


\begin{remark}
Let $E$ be a vector bundle on $X$ along with two subbundles 
$\mathcal{A},\, \mathcal{B}\,\subseteq\,\mbox{End}(E)$ such that for each closed point 
$x\,\in\, X$, both $\mathcal{A}_x$ and $\mathcal{B}_x$ are Cartan subalgebras of 
$\mbox{End}(E_x)$. Let $f\,:\,Y\, \longrightarrow\, X$ (respectively,
$g\,:\,Z\, \longrightarrow\,
X$) be the \'etale covering and $L\, \longrightarrow\, Y$ (respectively, $L'
\, \longrightarrow\, Z$) be the
line bundle associated to $\mathcal{A}$ (respectively, $\mathcal{B}$). We note 
that if $T$ is an automorphism of $E$ such that $\mathcal{B}\,=\,T^{-1}\mathcal{A}T$, 
then there exists an isomorphism of $X$--schemes $h\,:\,Y\, \longrightarrow\, Z$ such
$h^*L'\,=\, L$. The converse is also true.
\end{remark}

\subsection{An example}

Let $E$ be a vector bundle on $X$ such that there are two Cartan subalgebra
bundles ${\mathcal A}$ and ${\mathcal B}$ of $\text{End}(E)$. It is natural to ask whether
there is an automorphism $T$ of $E$ such that ${\mathcal B}\,=\, T^{-1}{\mathcal A}T$.
We give an example where there is no such $T$.

Let $X$ be a smooth projective elliptic curve defined over $\mathbb C$. It has 
exactly three nontrivial line bundles of order two. Let $L$ and $M$ be two distinct 
nontrivial line bundles on $X$ of order two. Therefore, the third nontrivial line 
bundle of order two is $L\otimes M$.

A theorem of Atiyah says that there are stable vector bundles on $X$ of rank two and 
degree one, and any two of them differ by tensoring with a holomorphic line bundle of 
degree zero \cite[p.~433,Theorem 6]{At}, \cite[p.~434,Theorem 7]{At} (see 
\cite[p.~70, Theorem 4.6]{BB}, \cite[p.~70--71, Theorem 4.7]{BB} for an exposition). 
Take a stable vector bundle $E$ on $X$ of rank two and degree one. If $N$ is a 
holomorphic line bundle on $X$, then
$$
\text{End}(E)\, =\, E\otimes E^*\,=\, (E\otimes N)\otimes
(E\otimes N)^*\,=\, \text{End}(E\otimes N)\, .
$$
Therefore, the endomorphism bundle $\text{End}(E)$ does not depend on the choice of 
the stable vector bundle $E$ of rank two and degree one. 
Since $E$ is stable, the vector bundle $\text{End}(E)$ is polystable. From the
classification of vector bundles on $X$ we know that any polystable vector bundle
on $X$ of degree zero is a direct sum of holomorphic line bundles \cite[p.~433,Theorem 6]{At}
(see also \cite[p.~70, Theorem 4.6]{BB}). It is known that
\begin{equation}\label{t1}
\text{End}(E)\, =\,{\mathcal O}_X\oplus L\oplus M\oplus
(L\otimes M)\, .
\end{equation}
This can also be seen as follows. Let $f\, :\, Y\, \longrightarrow\, X$
be the unramified double cover corresponding to $L$. Then there is a holomorphic
line bundle $\xi$ on $Y$ of degree one such that $E\,=\, f_*\xi$. Since $f_*{\mathcal O}_Y
\,=\, L\oplus {\mathcal O}_X$, this implies that $L$ is a
subbundle of $\text{End}(E)$. Hence $L$ is a direct summand of $\text{End}(E)$
because $\text{End}(E)$ is polystable of degree zero. Similarly,
$M$ and $M\otimes L$ are also direct direct summands of $\text{End}(E)$. Therefore,
it follows that $\text{End}(E)$ decomposes as in \eqref{t1}.

Since $E\,=\, f_*\xi$, we know that $f_*{\mathcal O}_Y\,=\, {\mathcal O}_X\oplus L$
is a Cartan subalgebra bundle of $\text{End}(E)$. Similarly,
${\mathcal O}_X\oplus M$ and ${\mathcal O}_X\oplus (M\otimes L)$ are also
Cartan subalgebra bundles of $\text{End}(E)$.

On the other hand $H^0(X, \, \text{End}(E))\,=\, \mathbb C$ because $E$ is stable;
note that this also follows from \eqref{t1}. Hence the automorphisms of $E$ act
trivially on $\text{End}(E)$.
Therefore, the above Cartan subalgebra bundles ${\mathcal O}_X\oplus L$,
${\mathcal O}_X\oplus M$ and ${\mathcal O}_X\oplus (M\otimes L)$ are not related by
automorphism of $E$.

\section{Ramified coverings of curves}

Throughout this section we assume that $X$ is an irreducible smooth projective curve 
defined over an algebraically closed field $k$.

A \textit{quasiparabolic} structure on a vector bundle $E$ over $X$ consists of the
following:
\begin{itemize}
\item a finite set of reduced distinct closed points $S\,=\, \{x_1,\, \cdots,\, x_n\}\,\subset\,
X$, and

\item for each $x_i\, \in\, S$, a filtration of subspaces
$$0\, \subsetneq\, F^i_1\, \subsetneq\, \cdots \, \subsetneq\,
F^i_{\ell_i}\,=\, E_{x_i}
$$
of the fiber $E_{x_i}$.
\end{itemize}
The subset $S$ is called the \textit{parabolic divisor}. A \textit{quasiparabolic bundle} is
a vector bundle equipped with a quasiparabolic structure. A \textit{parabolic vector bundle} is
a quasiparabolic bundle $(E,\, S,\, \{F^i_j\})$ as above together with real numbers
$\lambda^i_j$, $1\,\leq\, i\, \leq\, n$, $1\,\leq\, j\,\leq\, \ell_i$, such that
$$
1\, > \, \lambda^i_1\, >\, \lambda^i_2\, >\, \cdots\, > \,\lambda^i_{\ell_i} \, \geq\, 0\, .
$$
These numbers $\lambda^i_j$ are called \textit{parabolic weights}. For notational
convenience, a parabolic vector bundle $(E,\, S,\, \{F^i_j\},\, \{\lambda^i_j\})$
is abbreviated as $E_*$. See \cite{MS} for more on parabolic vector bundles.

We will consider only rational parabolic weights. Henceforth, we will assume that all the parabolic
weights are rational numbers.

Let $Y$ be a smooth projective curve, which need not be irreducible, and let
$$
f\,:\, Y\, \longrightarrow\, X
$$
be a finite separable morphism. Let $L_*$ be a parabolic line bundle on $Y$ with $L$
being the underlying line bundle. The direct image $E\, :=\, f_*L$
has a natural parabolic structure which will be described below.

Let $R\, \subset\, Y$ be the set of points where $f$ is ramified. Let $P\, \subset\, Y$
be the parabolic divisor for $L_*$. The parabolic divisor for the parabolic
structure on $E$ is the image $f(R\cup P)$. Take a point $x\, \in\, f(R\cup P)\setminus
f(R)$ in the complement of $f(R)$. Then
$$
(f_* L)_x\,=\, \bigoplus_{y\in f^{-1}(x)} L_y\, .
$$
The quasiparabolic filtration of $(f_* L)_x$ is constructed using this decomposition.
The parabolic weight of the line $L_y\, \subset\, (f_*L)_x$ is the parabolic weight of
$L_*$ at the point $y$. If $y$ is not a parabolic point of $L_*$, then the
parabolic weight of the line $L_y\, \subset\, (f_*L)_x$ is taken to be zero. Combining
these we get a parabolic structure on $E_x$.

Now take any $x\, \in \,f(R)$. Let $\{y_1,\,\cdots,\, y_m\}$ be the reduced
inverse image $f^{-1}(x)_{\rm red}$. The multiplicity of $f$ at $y_i$ will be denoted by $b_i$;
so $f^{-1}(x)\,=\, \sum_{i=1}^m b_iy_i$. For every $1\, \leq\, i\, \leq\, m$, let $V_i\, \subset\,
E_x$ be the image in the fiber $E_x$ for the natural homomorphism
$$
f_* (L\otimes {\mathcal O}_Y(-\sum_{j=1, j\not= i}^m b_jy_j))\, \longrightarrow\, f_*L\,=\, E\, .
$$
We have $\dim V_i\,=\, b_i$, and
\begin{equation}\label{e1}
E_x\,=\, \bigoplus_{i=1}^m V_i\, .
\end{equation}
We will construct a weighted filtration on each $V_i$; these combined together will give the weighted
filtration of $E_x$ using \eqref{e1}. For each $0\,\leq\, \ell\,\leq\, b_i$, let $F^i_\ell\,
\subset\, E_x$ be the image for the natural homomorphism
$$
f_* (L\otimes {\mathcal O}_Y(-\ell y_i -\sum_{j=1, j\not= i}^m b_jy_j))\, \longrightarrow\, f_*L\, .
$$
Note that $F^i_{b_i} \,=\, 0$ and $F^i_0 \,=\, V_i$; in particular, $F^i_\ell\,
\subset\, V_i$ for all $\ell$. It is easy to see that $\dim F^i_\ell\,=\, b_i-\ell$, so
$\{F^i_\ell\}_{\ell=0}^{b_i}$ is a complete flag
of subspaces of $V_i$. The weight of the subspace $F^i_\ell\,\subset\, V_i$,
$0\, \leq\,\ell\, < \,b_i$,
is $(\ell+\lambda_{y_i})/b_i$, where $\lambda_{y_i}$ is the parabolic weight of
$L_*$ at $y_i$; if $y_i$ is not a parabolic point of $L_*$, then $\lambda_{y_i}$
is taken to be zero. Note that $0\, \leq\, (\ell+\lambda_{y_i})/b_i\, <\, 1$.

Now the parabolic structure on $E$ over $x$ is given by these weighted filtrations using
\eqref{e1}. More precisely, for any $0\, \leq\, c\, <\, 1$, if $S^i_x(c)\, \subset\, V_i$,
$1\, \leq\, i\,\leq\, m$, is the subspace of $V_i$ of weight $c$, then the subspace of $E_x$ of
parabolic weight $c$ is the direct sum $\bigoplus_{i=1}^m S^i_x(c)$.

The direct image $f_*L$ equipped with the above parabolic structure will be denoted by
$f_*L_*$.

Since $H^i(Y,\, L)\,=\, H^i(X,\, f_*L)$, using the Riemann--Roch theorem for $L$ and
$f_*L$, we have
$$
\text{degree}(f_*L)\,=\, \text{degree}(L)-\text{genus}(Y)+1+\text{degree}(f)
(\text{genus}(X)-1)\, ,
$$
where $\text{genus}(Y)\,=\, \dim H^1(Y,\, {\mathcal O}_Y)$ (recall that $Y$ need
not be connected). On the other hand,
$$
2(\text{genus}(Y)-1)\,=\, \text{degree}(K_Y)\,=\, \text{degree}(K_X)
+\sum_{y\in R} (b_y-1)\,=\, 2(\text{genus}(X)-1)+\sum_{y\in R} (b_y-1)\, ,
$$
where $b_y$ is the multiplicity of $f$ at $y$ while $K_X$ and $K_Y$ are the
canonical line bundles of $X$ and $Y$ respectively. From these it follows that
$$
\text{par-deg}(f_*L_*)\,=\, \text{par-deg}(L_*)\, .
$$

Generalizing the constructions of direct sum, tensor product and dual of vector 
bundles, there are direct sum, tensor product and dual of parabolic vector bundles 
\cite{Yo}, \cite{MY}, \cite{Bi2}. It should be mentioned that for two parabolic 
vector bundles $E_*$ and $F_*$, while the underlying vector bundle for the parabolic 
direct sum $E_*\bigoplus F_*$ is the direct sum of the vector bundles underlying 
$E_*$ and $F_*$, the underlying vector bundle for the parabolic tensor product 
$E_*\otimes F_*$ is not necessarily the tensor product of the vector bundles 
underlying $E_*$ and $F_*$. Similarly, the underlying vector bundle for the parabolic 
dual $E^*_*$ is different from the dual of the vector bundles underlying $E_*$, 
unless the parabolic structure on $E_*$ is trivial (meaning there are no nonzero 
parabolic weights).

The endomorphism bundle for a parabolic vector bundle $E_*$ is defined to be
\begin{equation}\label{e4}
\text{End}(E_*)\,:=\,E_*\otimes E^*_*\, ;
\end{equation}
it should be emphasized that both the tensor product and dual in \eqref{e4} are
in the parabolic category.

\subsection{When the characteristic is zero}\label{se3.1}

In this subsection we assume that the characteristic of the base field $k$ is zero. 

Let $E_*$ be a parabolic vector bundle on $X$. Let $S\, \subset\, X$ be the parabolic
divisor for $E_*$. The vector bundle underlying $E_*$ will be denoted
by $E_0$. Consider the endomorphism (parabolic) bundle $\text{End}(E_*)$ defined in \eqref{e4}. The
vector bundle underlying it will be denoted by $\text{End}(E_*)_0$. The two vector bundles
$\text{End}(E_*)_0$ and $\text{End}(E_0)$ are identified over the complement $X\setminus S$. This
isomorphism extends to a homomorphism
$$
\beta\, :\, \text{End}(E_*)_0\, \longrightarrow\, \text{End}(E_0)
$$
over entire $X$. For any point $x\, \in\, S$, the subspace $\beta(x)((\text{End}(E_*)_0)_x)
\,\subset\, \text{End}(E_0)_x$ coincides with the space of endomorphisms of the fiber $(E_0)_x$
that preserve the quasiparabolic filtration of $(E_0)_x$. 

A parabolic vector bundle on $X$ can be expressed as the invariant direct image of an equivariant
vector bundle over a (ramified) Galois cover of $X$ \cite{Bi1}, \cite{Bo1}, \cite{Bo2}; recall
the assumption that all the parabolic weights are rational numbers. Let $\widetilde{X}$ be an irreducible
smooth projective curve,
\begin{equation}\label{ga}
\gamma\, :\, \widetilde{X}\, \longrightarrow\, X
\end{equation}
a Galois covering which may be ramified, and $\mathcal E$ a $\Gamma$--linearized 
vector bundle on $\widetilde{X}$, where $\Gamma\, :=\, \text{Gal}(\gamma)$, such that 
$E_*$ corresponds to $\mathcal E$. The vector bundle $E$ underlying $E_*$ is the
invariant direct image $(\gamma_* {\mathcal E})^\Gamma$; note that the action of $\Gamma$ on
${\mathcal E}$ produces an action of $\Gamma$ on $\gamma_* {\mathcal E}$. In particular,
we have $\text{rank}(E)\,=\, \text{rank}({\mathcal E})$. Consider the finite subset
$D'$ of $\widetilde{X}$ consisting of all points $y\, \in\, \widetilde{X}$ satisfying the
following two conditions:
\begin{itemize}
\item $y$ has nontrivial isotropy for the action of $\Gamma$ on $\widetilde{X}$,
and

\item the action of isotropy subgroup $\Gamma_y$ for $y$ on the fiber ${\mathcal E}_y$ is
nontrivial.
\end{itemize}
The image $\gamma(D')\, \subset\, X$ is the subset of $S$ consisting of all points over
which $E_*$ has nontrivial parabolic weight. The above isotropy subgroup $\Gamma_y$
for $y$ is cyclic; let $m_y$ be the order of $\Gamma_y$. Fix a generator
$\nu$ of $\Gamma_y$. A rational number $0\, \leq\, c\, <\, 1$ is a parabolic weight for
$E_*$ at $\gamma(y)$ if and only if $\exp(2\pi\sqrt{-1}c)$ is an eigenvalue for the
action of $\nu$ on ${\mathcal E}_y$. In particular, $c\cdot m_y$ is an integer.

The subbundles of $E_0$ with the parabolic 
structure induced by $E_*$ correspond to subbundles of $\mathcal E$ preserved by the 
action of $\Gamma$. The parabolic vector bundles $E^*_*$ and $\text{End}(E_*)$ 
correspond to ${\mathcal E}^*$ and $\text{End}({\mathcal E})$ respectively; note that 
the $\Gamma$--linearization of $\mathcal E$ induces $\Gamma$--linearizations on both 
${\mathcal E}^*$ and $\text{End}({\mathcal E})$.

A subbundle ${\mathcal A}\, \subset\, \text{End}(E_*)_0$ will be called a \textit{Cartan subalgebra
bundle} if the following two conditions hold:
\begin{enumerate}
\item For each closed point $x\, \in\, X\setminus S$, the fiber ${\mathcal A}_x\, \subset\,
(\text{End}(E_*)_0)_x\,=\, \text{End}((E_0)_x)$ is a Cartan subalgebra of the Lie algebra
$\text{End}((E_0)_x)$.

\item For each point $x\,\in\, S$, the fiber of the $\Gamma$--linearized subbundle of
$\text{End}({\mathcal E})$ corresponding to $\mathcal A$ over a point $y\, \in\, \gamma^{-1}(x)$
is a Cartan subalgebra of the Lie algebra $\text{End}({\mathcal E}_y)$. (If this condition
holds for one point of $\gamma^{-1}(x)$ then it holds for all points of
$\gamma^{-1}(x)$; this is because of the action of $\Gamma$.)
\end{enumerate}

The above definition of a Cartan subalgebra bundle of $\text{End}(E_*)_0$ does not depend on the
choice of the covering $\gamma$. To see this, if
$$
\gamma'\, :\,\widetilde{X}'\, \longrightarrow\, X
$$
is another such covering, then consider the normalization $\mathcal X$ of the fiber product
$\widetilde{X}\times_X \widetilde{X}'$. This covering $\mathcal X$ of $X$ also satisfies the
conditions. If ${\mathcal E}'$ is the equivariant vector bundle on $\widetilde{X}'$ corresponding
to $E_*$, then the pullbacks of $\mathcal E$ and ${\mathcal E}'$ to $\mathcal X$
are equivariantly isomorphic to the equivariant bundle on $\mathcal X$ corresponding
to $E_*$. Hence a fiberwise decomposition of $\mathcal E$ produces a fiberwise decomposition
of its pullback to $\mathcal X$, which in turn descends to a fiberwise decomposition of
${\mathcal E}'$.

Note that ${\mathcal A}\, \subset\, \text{End}(E_*)_0$ is a Cartan subalgebra bundle if and only 
if the $\Gamma$--linearized subbundle $\widetilde{\mathcal A}\,\subset\, \text{End}({\mathcal E})$ 
corresponding to $\mathcal A$ has the property that for every $y\, \in\, \widetilde X$, the 
subspace $\widetilde{\mathcal A}_y\,\subset\, \text{End}({\mathcal E})_y$ is a Cartan subalgebra 
of the Lie algebra $\text{End}({\mathcal E})_y$.

\begin{theorem}\label{thm2}
Given a parabolic vector bundle $E_*$ on $X$, the following two are equivalent:
\begin{enumerate}
\item There is a finite surjective map 
$$
f\, :\, Y\, \longrightarrow\, X\, ,
$$
where $Y$ is a smooth projective curve not necessarily connected,
and a parabolic line bundle $L_*$ on $Y$, such that
\begin{itemize}
\item $E_*$ has a nontrivial parabolic weight at $x\, \in\, X$ if and only if
there is a point $y\, \in\, f^{-1}(x)$ satisfying the condition that either $y$ is a
parabolic point for $L_*$ or $f$ is ramified at $y$ (or both), and

\item the parabolic vector bundle $f_*L_*$ is isomorphic to $E_*$.
\end{itemize}

\item There is a Cartan subalgebra bundle $\mathcal A$ of ${\rm End}(E_*)_0$.
\end{enumerate}
When there is a Cartan subalgebra bundle ${\mathcal A}\, \subset\, {\rm End}(E_*)_0$, the pair
$(Y,\, f)$ in the first statement can be so chosen that the subbundle
${\mathcal A}$ equipped with the parabolic structure induced by
${\rm End}(E_*)$ is isomorphic to $f_*{\mathcal O}_Y$ equipped with the natural parabolic
structure (the parabolic structure on ${\mathcal O}_Y$ is the trivial one, meaning it has
no nonzero parabolic weight). In that case, the number of connected components of $Y$ coincides with $\dim H^0(X,\,
{\mathcal A})$.
\end{theorem}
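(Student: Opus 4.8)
The plan is to reduce the entire statement to the \'etale case already settled in Theorem \ref{thm1} by passing to a Galois cover on which the parabolic bundle becomes an honest equivariant bundle. I would fix a Galois covering $\gamma\,:\,\widetilde{X}\,\longrightarrow\, X$ as in \eqref{ga}, with $\Gamma\,=\,\text{Gal}(\gamma)$, such that $E_*$ corresponds to a $\Gamma$--linearized vector bundle $\mathcal E$ on $\widetilde X$. By the criterion recorded just before the statement, giving a Cartan subalgebra bundle $\mathcal A\,\subset\,\text{End}(E_*)_0$ is the same as giving the associated $\Gamma$--linearized subbundle $\widetilde{\mathcal A}\,\subset\,\text{End}(\mathcal E)$ with the property that $\widetilde{\mathcal A}_y$ is a Cartan subalgebra of $\text{End}(\mathcal E_y)$ for every $y\,\in\,\widetilde X$. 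Thus the whole theorem becomes an \emph{equivariant} version of Theorem \ref{thm1} over $\widetilde X$, and the main work is to make the constructions of Section 2 equivariant and then descend them, while keeping track of parabolic weights.

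For the implication $(2)\,\Rightarrow\,(1)$, I would start from $\widetilde{\mathcal A}\,\subset\,\text{End}(\mathcal E)$ and apply Proposition \ref{prop1} to the triple $(\widetilde X,\,\mathcal E,\,\widetilde{\mathcal A})$. This yields an \'etale covering $\widetilde f\,:\,\widetilde Y\,\longrightarrow\,\widetilde X$ and a line bundle $\widetilde L$ on $\widetilde Y$ with $\widetilde f_*\widetilde L\,=\,\mathcal E$ and $\widetilde f_*\mathcal O_{\widetilde Y}\,=\,\widetilde{\mathcal A}$. The key point is that the construction of $(\widetilde Y,\,\widetilde f,\,\widetilde L)$ in the proof of Proposition \ref{prop1} is canonical: $\widetilde Y$ is the locus of the eigenfunctionals $\{\mu^y_t\}$ inside the total space of $\widetilde{\mathcal A}^*$, and $\widetilde L$ is the tautological eigenline subbundle of $\widetilde f^*\mathcal E$. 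Since $\widetilde{\mathcal A}$ and $\mathcal E$ are $\Gamma$--linearized, $\Gamma$ acts on $\widetilde Y$ compatibly with $\widetilde f$, and $\widetilde L$ inherits a $\Gamma$--linearization. I would then set $Y\,:=\,\widetilde Y/\Gamma$ (a smooth projective curve, being the quotient of a smooth curve by a finite group), let $f\,:\,Y\,\longrightarrow\,X$ be the induced map, and let $L_*$ be the parabolic line bundle on $Y$ corresponding to the $\Gamma$--linearized $\widetilde L$. Writing $q\,:\,\widetilde Y\,\longrightarrow\,Y$ for the quotient and using $\gamma\circ\widetilde f\,=\,f\circ q$ together with $\mathcal E\,=\,\widetilde f_*\widetilde L$, one gets
$$
E\,=\,(\gamma_*\mathcal E)^\Gamma\,=\,\big((\gamma\widetilde f)_*\widetilde L\big)^\Gamma\,=\,f_*\big((q_*\widetilde L)^\Gamma\big)\, ,
$$
and the compatibility of the equivariant--parabolic correspondence with direct images upgrades this to $f_*L_*\,=\,E_*$; the same computation with $\widetilde L$ replaced by $\mathcal O_{\widetilde Y}$ identifies $\mathcal A$, with its induced parabolic structure, with $f_*\mathcal O_Y$ carrying the trivial parabolic structure.

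For the implication $(1)\,\Rightarrow\,(2)$, I would instead choose $\gamma$ large enough to absorb the ramification of $f$ and the parabolic weights of $L_*$ simultaneously: by Abhyankar's lemma one can take $\gamma$ Galois so that $E_*$ becomes equivariant and the normalization $\widetilde Y$ of the fiber product $\widetilde X\times_X Y$ is \'etale over $\widetilde X$, with the pulled-back $L_*$ becoming an ordinary $\Gamma$--linearized line bundle $\widetilde L$ on $\widetilde Y$. Compatibility of the correspondence with direct images gives $\widetilde f_*\widetilde L\,=\,\mathcal E$, where $\widetilde f\,:\,\widetilde Y\,\longrightarrow\,\widetilde X$ is the induced \'etale map. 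Now Lemma \ref{lem1} applied to $(\widetilde Y,\,\widetilde f,\,\widetilde L)$ shows that $\widetilde{\mathcal A}\,:=\,\widetilde f_*\mathcal O_{\widetilde Y}\,\subset\,\text{End}(\mathcal E)$ is fiberwise a Cartan subalgebra, and it is manifestly $\Gamma$--linearized; by the criterion quoted at the outset, the corresponding $\mathcal A\,\subset\,\text{End}(E_*)_0$ is a Cartan subalgebra bundle. Finally, the count of connected components follows from $H^0(X,\,\mathcal A)\,=\,H^0(X,\,f_*\mathcal O_Y)\,=\,H^0(Y,\,\mathcal O_Y)$, whose dimension is the number of connected components of $Y$, exactly as in Corollary \ref{cor1}.

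The step I expect to be the main obstacle is the bookkeeping of parabolic weights, that is, checking that the parabolic structure obtained by descent agrees on the nose with the natural parabolic structure on a direct image described in Section 3, and that the first bullet of statement $(1)$ holds. Concretely, at a point $x\,\in\,X$ lying under a ramification point, the isotropy group $\Gamma_{\bar y}$ acts on $\mathcal E_{\bar y}\,=\,\bigoplus_z\widetilde L_z$ both by permuting the summands (producing the ramification indices $b_i$ and the flags feeding into \eqref{e1}) and by scaling each $\widetilde L_z$ (producing the weights $\lambda_{y_i}$), and I would need to verify that the resulting eigenvalues $\exp(2\pi\sqrt{-1}\,(\ell+\lambda_{y_i})/b_i)$ match the weights prescribed in the construction surrounding \eqref{e1}. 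Establishing this precise dictionary, together with the claims that $\widetilde Y/\Gamma$ is smooth and that $\widetilde L$ genuinely descends to a parabolic line bundle, is where the essential verification lies; the Cartan condition itself is handed to us by the \'etale theory upstairs.
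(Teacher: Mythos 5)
Your proposal follows essentially the same route as the paper's proof: fix a Galois cover $\gamma:\widetilde X\to X$ realizing $E_*$ as a $\Gamma$--linearized bundle $\mathcal E$, apply Proposition \ref{prop1} equivariantly to the lifted Cartan subalgebra bundle for one direction, pass to the normalization of the fiber product (étale over $\widetilde X$ by the local $z\mapsto z^a$ versus $z\mapsto z^b$ computation, i.e.\ Abhyankar) and invoke Lemma \ref{lem1} for the other, then descend by taking $\Gamma$--quotients and match $\gamma_*\phi_*$ with $f_*q_*$. The points you flag as remaining verifications (equivariance of the eigenfunctional construction, compatibility of the parabolic/equivariant dictionary with direct images, and the weight bookkeeping around \eqref{e1}) are exactly the ones the paper also asserts rather than checks in detail, so the proposal is correct and not a genuinely different argument.
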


\begin{proof}
Fix a Galois covering $(\widetilde{X},\, \gamma)$ as in \eqref{ga} with $\Gamma\,=\, 
\text{Gal}(\gamma)$ such that there is a $\Gamma$--linearized vector bundle 
$\mathcal E$ on $\widetilde{X}$ that corresponds to the parabolic vector bundle 
$E_*$.

Assume that the first statement in the theorem holds. Take
$(Y,\, f,\, L_*)$ as in the first statement. Let $\widetilde{Y}$ denote the
normalization of the fiber product $Y\times_X\widetilde{X}$. Let
$$
p_1\, :\, \widetilde{Y}\, \longrightarrow\, Y\ \ \text{ and }\ \
p_2\, :\, \widetilde{Y}\, \longrightarrow\, \widetilde{X}
$$
be the natural projections. We note that the normalization of a fiber product has the
following property: Consider two ramified coverings ${\mathbb A}^1\, \longrightarrow\, {\mathbb A}^1$
defined by $z\, \longmapsto\, z^a$ and $z\, \longmapsto\, z^b$ respectively; then
the projection to the second factor of the normalization of the fiber product
${\mathbb A}^1\times_{{\mathbb A}^1}{\mathbb A}^1$ is an \'etale covering of ${\mathbb A}^1$ if
$b$ is a multiple of $a$. From this it follows that the above projection $p_2$ is an \'etale covering.

The action of $\Gamma$ on $\widetilde{X}$ produces an action
of $\Gamma$ on $Y\times_X\widetilde{X}$, hence $\widetilde{Y}$ is equipped
with an action of $\Gamma$; the map $p_2$ intertwines the actions of $\Gamma$ on
$\widetilde{Y}$ and $\widetilde{X}$. The above morphism $p_1$ is
evidently $\Gamma$ invariant, and hence it produces a morphism
$$
\widetilde{Y}/\Gamma\, \longrightarrow\, Y\, ;
$$
it is easy to see that this morphism is an isomorphism. There is a
$\Gamma$--linearized line bundle $$\widetilde{L}\, \longrightarrow\, \widetilde{Y}$$
that corresponds to the parabolic line bundle $L_*$ on $Y$.

Since $p_2$ is $\Gamma$--equivariant, the action of $\Gamma$ on $\widetilde{L}$ 
produces an action of $\Gamma$ on the direct image $p_{2*}\widetilde{L}$. The 
parabolic vector bundle $f_*L_*$ corresponds to this $\Gamma$--linearized vector 
bundle $p_{2*}\widetilde{L}$.

Since $p_2$ is \'etale, by Lemma \ref{lem1}, there is a homomorphism from the
direct image
$$
\xi\, :\, p_{2*}{\mathcal O}_{\widetilde Y}\, \longrightarrow\,
\text{End}(p_{2*}\widetilde{L})
$$
whose fiberwise images are Cartan subalgebras. The action of $\Gamma$ on $\widetilde 
Y$ produces a $\Gamma$--linearization on ${\mathcal O}_{\widetilde Y}$. Since $p_2$ 
is $\Gamma$--equivariant, the action of $\Gamma$ on ${\mathcal O}_{\widetilde Y}$ 
produces an action of $\Gamma$ on $p_{2*}{\mathcal O}_{\widetilde Y}$. The above 
homomorphism $\xi$ is $\Gamma$--equivariant for the action of $\Gamma$ on 
$\text{End}(p_{2*}\widetilde{L})$ induced by the action of $\Gamma$ on 
$p_{2*}\widetilde{L}$ and the action of $\Gamma$ on $p_{2*}{\mathcal
O}_{\widetilde Y}$. Therefore, the second statement in the theorem holds.

Now assume that the second statement in the theorem holds. Let $\mathcal A$ be a 
Cartan subalgebra bundle of ${\rm End}(E_*)_0$. The $\Gamma$--linearized vector bundle
$\text{End}({\mathcal E})$ corresponds to the parabolic vector bundle ${\rm End}(E_*)$, because
$E_*$ corresponds to $\mathcal E$. Let $\mathcal B$ be the subbundle of
$\text{End}({\mathcal E})$ preserved by the action of $\Gamma$ such that $\mathcal B$ corresponds
to the subbundle $\mathcal A$ of ${\rm End}(E_*)_0$. So for any closed point $x\, \in\, \widetilde X$,
the subspace ${\mathcal B}_x\, \subset\, \text{End}({\mathcal E}_x)$ is a Cartan subalgebra.

By Proposition \ref{prop1}, there is an \'etale covering
$$
\phi\, :\, \widetilde{Y}\, \longrightarrow\, \widetilde{X}
$$
and a line bundle $\mathcal L$ on $\widetilde{Y}$ such that
\begin{equation}\label{is}
\phi_*{\mathcal L}\,=\, {\mathcal E}\, .
\end{equation}
Since $\mathcal B$ is preserved by the action of $\Gamma$ on
$\text{End}({\mathcal E})$ induced by the action of $\Gamma$ on $\mathcal E$, from the construction
in Proposition \ref{prop1} it follows that
\begin{itemize}
\item $\Gamma$ acts on $\widetilde Y$,

\item the map $\phi$ intertwines the actions of $\Gamma$ on $\widetilde Y$ and $\widetilde X$,

\item the line bundle $\mathcal L$ is $\Gamma$--linearized, and

\item the isomorphism in \eqref{is} is $\Gamma$--equivariant.
\end{itemize}
Since $\phi$ is $\Gamma$--equivariant, the composition
$$
\gamma\circ \phi\, :\, {\widetilde Y}\, \longrightarrow\, X
$$
factors through a map
\begin{equation}\label{f}
f\, :\, Y\, :=\, {\widetilde Y}/\Gamma\, \longrightarrow\, X\, .
\end{equation}

Let $q\, :\, {\widetilde Y}\, \longrightarrow\,{\widetilde Y}/\Gamma\,=\,Y$ be the 
quotient map. The parabolic line bundle on $Y$ corresponding to the 
$\Gamma$--linearized line bundle ${\mathcal L}$ on $\widetilde Y$ will be denoted by 
$L_*$.

For any vector bundle $W\, \longrightarrow\, {\widetilde Y}$, there is a canonical
isomorphism
\begin{equation}\label{ci}
(f\circ q)_* W\,=\, f_*q_*W\, \stackrel{\sim}{\longrightarrow}\, \gamma_*\phi_*W
\,=\, (\gamma\circ \phi)_*W\, .
\end{equation}
The action of $\Gamma$ on ${\mathcal L}$ produces actions of $\Gamma$ on both $(f\circ
q)_* {\mathcal L}$ and $(\gamma\circ \phi)_*{\mathcal L}$. The isomorphism in 
\eqref{ci} is $\Gamma$--equivariant for $W\,=\, \mathcal L$.

Since the isomorphism in \eqref{ci} for $\mathcal L$ is $\Gamma$--equivariant, it can 
be deduced that for the above parabolic line bundle $L_*$ on $Y$, the parabolic 
vector bundle $f_*L_*$ on $X$, where $f$ is defined in \eqref{f}, corresponds to the 
$\Gamma$--linearized vector bundle $\phi_*{\mathcal L}$ on $\widetilde X$. Also, as the 
isomorphism in \eqref{is} for ${\mathcal L}$ is $\Gamma$--equivariant, and the parabolic
vector bundle $E_*$ corresponds to the $\Gamma$--linearized vector bundle $\mathcal E$, from the 
above observation on \eqref{ci} it also follows that the two parabolic vector bundle 
$f_*L_*$ and $E_*$ are isomorphic.

In view of the above proof, from Theorem \ref{thm1} we conclude the following: If 
there is a Cartan subalgebra bundle ${\mathcal A}\, \subset\, {\rm End}(E_*)_0$, 
then the pair $(Y,\, f)$ in the first statement of the theorem can be so chosen that 
the subbundle ${\mathcal A}$ equipped with the parabolic structure induced by ${\rm 
End}(E_*)$ is isomorphic to $f_*{\mathcal O}_Y$ equipped with the natural parabolic 
structure (the parabolic structure on ${\mathcal O}_Y$ is the trivial one). In that 
case, the number of connected components of $Y$ coincides with $\dim H^0(X,\, 
{\mathcal A})$.
\end{proof}

In Theorem \ref{thm2}, setting $L\,=\, {\mathcal O}_Y$ equipped with
the trivial parabolic structure, we have the following:

\begin{corollary}\label{cor3}
Given a parabolic vector bundle $E_*$ on $X$, the following two are equivalent:
\begin{enumerate}
\item There is a finite surjective map 
$$
f\, :\, Y\, \longrightarrow\, X\, ,
$$
where $Y$ is a smooth projective curve not necessarily connected, such that 
$f_*{\mathcal O}_Y$ equipped with the natural parabolic structure is
isomorphic to $E_*$ (the parabolic structure on ${\mathcal O}_Y$ is the trivial one).

\item There is a homomorphism of parabolic bundles $\alpha\, :\, E_*\, \longrightarrow\,
{\rm End}(E_*)$ such that
\begin{itemize}

\item $\alpha$ is an isomorphism of $E_*$ with the image $\alpha(E_*)$ equipped
with the parabolic structure induced by the parabolic structure of ${\rm End}(E_*)$, and

\item $\alpha(E_0)$ is a Cartan subalgebra bundle of ${\rm End}(E_*)_0$, where $E_0$ is
the vector bundle underlying $E_*$.
\end{itemize}
\end{enumerate}
When these hold, the number of connected components of $Y$ coincides with $\dim H^0(X,\,
E_0)$.
\end{corollary}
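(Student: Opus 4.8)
The plan is to deduce this corollary from Theorem \ref{thm2} by specializing the parabolic line bundle $L_*$ on $Y$ to $\mathcal{O}_Y$ carrying the trivial parabolic structure; this is the parabolic analogue of how Corollary \ref{cor2} is read off from Theorem \ref{thm1}. The mechanism I would exploit is already present in Theorem \ref{thm2}: whenever a Cartan subalgebra bundle $\mathcal{A}\,\subset\,\text{End}(E_*)_0$ exists, the pair $(Y,\,f)$ can be arranged so that $\mathcal{A}$, endowed with the parabolic structure $\mathcal{A}_*$ induced from $\text{End}(E_*)$, is isomorphic \emph{as a parabolic bundle} to $f_*\mathcal{O}_Y$ (with $\mathcal{O}_Y$ trivially parabolic). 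Thus the crux is to recognize that asking for $L_*\,=\,\mathcal{O}_Y$ is exactly the same as asking that this parabolic Cartan bundle $\mathcal{A}_*$ be isomorphic to $E_*$ itself.

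For the implication $(1)\,\Rightarrow\,(2)$, suppose $f\,:\,Y\,\longrightarrow\,X$ is finite surjective with $f_*\mathcal{O}_Y\,\cong\, E_*$ as parabolic bundles. Applying the implication $(1)\,\Rightarrow\,(2)$ of Theorem \ref{thm2} with $L_*\,=\,\mathcal{O}_Y$ produces a Cartan subalgebra bundle $\mathcal{A}\,\subset\,\text{End}(E_*)_0$; moreover, the construction (the parabolic version of $\varphi$ in \eqref{e3}, whose fiberwise images are Cartan subalgebras by Lemma \ref{lem1}) identifies $\mathcal{A}_*$ with $f_*\mathcal{O}_Y$, hence with $E_*$. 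I would then take $\alpha$ to be the composite $E_*\,\stackrel{\sim}{\longrightarrow}\,\mathcal{A}_*\,\hookrightarrow\,\text{End}(E_*)$, where the first arrow is this parabolic isomorphism and the second is the inclusion of a saturated parabolic subbundle. By construction $\alpha$ is a parabolic homomorphism that is an isomorphism onto $\alpha(E_*)\,=\,\mathcal{A}_*$ with its induced parabolic structure, while $\alpha(E_0)\,=\,\mathcal{A}$ is a Cartan subalgebra bundle; so both bullet conditions in $(2)$ hold.

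For the converse $(2)\,\Rightarrow\,(1)$, given $\alpha\,:\,E_*\,\longrightarrow\,\text{End}(E_*)$ as in $(2)$, I would set $\mathcal{A}\,:=\,\alpha(E_0)$, which by the second bullet is a Cartan subalgebra bundle of $\text{End}(E_*)_0$. Theorem \ref{thm2} then supplies $(Y,\,f)$, chosen so that $\mathcal{A}_*\,\cong\, f_*\mathcal{O}_Y$ with $\mathcal{O}_Y$ trivially parabolic. The first bullet of $(2)$ says precisely that $\alpha$ realizes $E_*$ as isomorphic to $\alpha(E_*)\,=\,\mathcal{A}_*$, so chaining the two isomorphisms gives $E_*\,\cong\,\mathcal{A}_*\,\cong\, f_*\mathcal{O}_Y$, which is statement $(1)$. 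The weight-normalization bullet of Theorem \ref{thm2}$(1)$ is automatic here, since with $L_*\,=\,\mathcal{O}_Y$ the parabolic points of $f_*\mathcal{O}_Y$ lie exactly over the branch locus of $f$. The count of connected components follows at once: Theorem \ref{thm2} gives that this number equals $\dim H^0(X,\,\mathcal{A})$, and since $\alpha$ restricts to a vector-bundle isomorphism $E_0\,\stackrel{\sim}{\longrightarrow}\,\mathcal{A}$, we have $\dim H^0(X,\,\mathcal{A})\,=\,\dim H^0(X,\,E_0)$.

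The one genuinely delicate point, and the place where I would be most careful, is the bookkeeping of two \emph{a priori distinct} parabolic structures on the single underlying subbundle $\mathcal{A}\,=\,\alpha(E_0)$: the structure $\mathcal{A}_*$ induced from $\text{End}(E_*)$, and the structure transported from $E_*$ through the parabolic map $\alpha$. In the non-parabolic Corollary \ref{cor2} no such distinction exists, so there any fiberwise injective $\alpha$ with Cartan image suffices; here the first bullet of $(2)$ is exactly the demand that these two parabolic structures agree, and this is what pins the line bundle down to the trivially parabolic $\mathcal{O}_Y$ rather than a general $L_*$. I would therefore verify that the isomorphism $\mathcal{A}_*\,\cong\, f_*\mathcal{O}_Y$ furnished by Theorem \ref{thm2} is literally an isomorphism of parabolic bundles for the \emph{induced} structure on $\mathcal{A}$, so that it composes correctly with the parabolic isomorphism $E_*\,\cong\,\alpha(E_*)$ coming from $\alpha$.
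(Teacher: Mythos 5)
Your proposal is correct and follows exactly the paper's route: the paper obtains Corollary \ref{cor3} by specializing Theorem \ref{thm2} to $L_*=\mathcal{O}_Y$ with trivial parabolic structure, which is precisely your argument. You simply spell out the details the paper leaves implicit (the identification $E_*\cong\mathcal{A}_*$ via the first bullet of (2), the automatic weight-normalization condition, and the component count via $H^0(X,\mathcal{A})\cong H^0(X,E_0)$), and these details check out against the construction in the proof of Theorem \ref{thm2}.
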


\subsection{The case of positive characteristic}\label{sc3p}

Assume that the base field $k$ is of positive characteristic. Let $p$ denote the 
characteristic of $k$.

The correspondence between parabolic vector bundles and equivariant vector bundles used 
extensively in Section \ref{se3.1} remains valid under the following tameness condition (see 
\cite{Bo1}, \cite{Bo2}):

If $a/b$ is a parabolic weight, where $a$ and $b$ are nonzero coprime integers, then we assume
that $b$ is not a multiple of $p$.

Once we impose the above condition on $E_*$, the proof of Theorem \ref{thm2} goes through without
any change. Similarly, Corollary \ref{cor3} remains valid after the above tameness condition on
$E_*$ is imposed.

\begin{remark}\label{rem1}
Theorem \ref{thm1} and Corollary \ref{cor2} remain valid when $X$ is a root-stack; see
\cite{Ca}, \cite{Bo1}, \cite{Bo2} for root-stacks. When the parabolic divisor is a simple
normal crossing divisor, quasiparabolic filtrations satisfy certain conditions and the
parabolic weights are tame (in positive characteristic case), there is an equivalence between
parabolic vector bundles over a smooth projective variety $Y$ and vector bundles over smooth
root-stack whose underlying coarse moduli space is $Y$; see \cite{Bo1}, \cite{Bo2}. Therefore,
under the above assumptions on parabolic structure, Theorem \ref{thm2} and Corollary
\ref{cor3} extend to higher dimensions.
\end{remark}

\section{Factoring of covering maps}\label{se5}

In this section we assume that the characteristic of $k$ is zero.

Let 
\begin{equation}\label{factor}\xymatrix{Y\ar[rr]^f\ar[dr]_g&&X\\&Z\ar[ur]_h}\end{equation}
be a commutative diagram of \'etale coverings, and define $V:=h_*\mathcal{O}_Z$ and 
$W:=f_*\mathcal{O}_Y$. Notice that the homomorphism $\mathcal{O}_Z\,
\longrightarrow\, g_*\mathcal{O}_Y$ induces 
a homomorphism
\begin{equation}\label{dc}
V\,\longrightarrow\, W
\end{equation}
which is fiberwise injective, and so $V$ is a subbundle of $W$. Notice moreover 
that $V$ gives way to a Cartan subalgebra of $\mbox{End}(V)$ and $W$ produces a Cartan subalgebra 
of $\mbox{End}(W)$. Let $\mathbb{V}^*$ and $\mathbb{W}^*$ be the total spaces of $V^*$ and $W^*$, 
respectively. We have the following commutative diagram\\

\centerline{\xymatrix{Y\ar[dr]_g\ar@{^{(}->}[r]&(g_*\mathcal{O}_Y)^*\ar@{^{(}->}[r]\ar[d]&\mathbb{W}^*\ar[d]\\&Z\ar[dr]_h\ar@{^{(}->}[r]&\mathbb{V}^*\ar[d]\\&&X}}
 
\vspace{0.4cm}

\noindent where the map $\mathbb{W}^*\,\longrightarrow\, \mathbb{V}^*$ is the dual of the homomorphism
in \eqref{dc}, and $(g_*\mathcal{O}_Y)^*$ is the restriction of this fiber bundle $\mathbb{W}^*$ to
$Z\, \subset\, \mathbb{V}^*$. Over $x\in X$, in $\mathbb{W}^*$ we have the functionals
$\mathcal{G}_x\,:=\,\{\mu_{t,W}^x\}_{t\in B_{x,W}}$ that define the preimage of $x$ in $Y$ and in $\mathbb{V}^*$ we have the linear functionals $\mathcal{H}_x:=\{\mu_{t,V}^x\}_{t\in B_{x,V}}$ that define the preimage of $x$ in $Z$. We note that under the map $\mathbb{W}^*\,\longrightarrow\, \mathbb{V}^*$, $\mathcal{G}_x$ is taken to $\mathcal{H}_x$. This implies that for every $v\in V$, there exists $t'\in B_{x,W}$ such that for every $v'\in\ell_{t,V}^x$,
$$v(v')\,=\,\mu_{t',W}^x(v)\cdot v'.$$
Now notice that $V$ is a direct summand of $W$, and therefore we have homomorphisms $i_V
\,:\,V\,\longrightarrow\, W$
and $p_V\,:\,W\,\longrightarrow\, V$ such that $p_Vi_V\,=\,\mbox{id}_V$. These induce a homomorphism
$\psi\,:\,\mbox{End}(W)\,\longrightarrow\,\mbox{End}(V)$, $f\,\longmapsto\, p_{V}\circ f\circ i_V$. Now, the previous condition just means that if $V_x$ is embedded in $\mbox{End}(V_x)$ as a Cartan subalgebra, $W_x$ is embedded in $\mbox{End}(W_x)$ as a Cartan subalgebra, then the following diagram commutes:\\

\centerline{\xymatrix{V_x\ar@{^{(}->}[r]^{i_{V,x}}\ar@{^{(}->}[d]&W_x\ar@{^{(}->}[d]\\\mbox{End}(V_x)&\mbox{End}(W_x)\ar[l]^{\psi_x}}}

Indeed, we see that necessarily there is a subset of $\{\ell_t^x\}_{t\in B_{W,x}}$ that generates $V_x$, and so the commutativity of the previous diagram just means that $\mu_{t,W}^x$ is taken to a $\mu_{t',V}^x$.
 
By retracing our steps, we have proved the following proposition:

\begin{proposition}
Let $f\,:\,Y\,\longrightarrow\, X$ be an \'etale covering. Then there is a bijection between intermediate \'etale 
coverings as in (\ref{factor}) and direct summands $V$ of $f_*\mathcal{O}_Y$ such that $V_x$ has 
an embedding as a Cartan subalgebra of ${\rm End}(V_x)$ for every $x\in X$ and such that the 
induced diagram
\centerline{\xymatrix{V_x\ar@{^{(}->}[r]\ar@{^{(}->}[d]&(f_*\mathcal{O}_Y)_x\ar@{^{(}->}[d]\\
{\rm End}(V_x)&{\rm End}((f_*\mathcal{O}_Y)_x)\ar[l]}}
commutes.
\end{proposition}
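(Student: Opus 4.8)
The plan is to establish the claimed bijection by constructing explicit assignments in both directions and checking that they are mutually inverse, using throughout the dictionary between Cartan subalgebra bundles and \'etale coverings provided by Theorem \ref{thm1} and Corollary \ref{cor2}. The two total-space pictures $Y\,\subset\,\mathbb{W}^*$ and $Z\,\subset\,\mathbb{V}^*$ as loci of evaluation functionals, together with the restriction map $\mathbb{W}^*\,\longrightarrow\,\mathbb{V}^*$ dual to the inclusion $V\,\hookrightarrow\, W$, will be the main bookkeeping device.

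First I would treat the assignment from an intermediate covering to a direct summand. Given a factorization as in \eqref{factor}, set $V\,:=\,h_*\mathcal{O}_Z$. The unit map $\mathcal{O}_Z\,\longrightarrow\, g_*\mathcal{O}_Y$, after applying $h_*$ and using $f_*\mathcal{O}_Y\,=\,h_*g_*\mathcal{O}_Y$, yields the fiberwise injective homomorphism \eqref{dc}; since $h$ is \'etale, its trace form splits this inclusion, so $V$ is genuinely a direct summand of $W$, which is what makes the projection $p_V$ and hence $\psi$ available. That $V_x$ embeds as a Cartan subalgebra of $\mathrm{End}(V_x)$ is Lemma \ref{lem1} applied to $h$, and the commutativity of the required fiber diagram is a direct unwinding of the fact that the inclusion $V_x\,\hookrightarrow\, W_x$ is the pullback-of-functions embedding, carrying the summand of $V_x$ indexed by $z\,\in\, h^{-1}(x)$ diagonally into the sum of the summands of $W_x$ indexed by $g^{-1}(z)$.

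Conversely, given a direct summand $V\,\subseteq\, W$ satisfying the Cartan and compatibility conditions, I would invoke Corollary \ref{cor2} (equivalently Proposition \ref{prop1}) to produce an \'etale covering $h\,:\,Z\,\longrightarrow\, X$ with $h_*\mathcal{O}_Z\,=\,V$, realized as the locus $Z\,\subset\,\mathbb{V}^*$ of the functionals $\{\mu_{s,V}^x\}$. The essential task is then to manufacture the intermediate map $g\,:\,Y\,\longrightarrow\, Z$ with $h\circ g\,=\,f$. For this I would dualize $V\,\hookrightarrow\, W$ to the surjection $W^*\,\longrightarrow\, V^*$, giving the restriction map $\mathbb{W}^*\,\longrightarrow\,\mathbb{V}^*$ of the diagram above the statement; the commuting fiber square is precisely the assertion that this map sends each functional $\mu_{t,W}^x$ cutting out $Y$ to one of the functionals $\mu_{s,V}^x$ cutting out $Z$, so it restricts to a morphism $g\,:\,Y\,\longrightarrow\, Z$ over $X$, with $h\circ g\,=\,f$ since both sides are the projection to $X$. \'Etaleness of $g$ then follows by cancellation from $f\,=\,h\circ g$ with $f$ and $h$ \'etale.

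The main obstacle is the heart of this converse: showing that commutativity of the fiber square is \emph{exactly} equivalent to the restriction map sending the $Y$-functionals into the $Z$-functionals, i.e.\ that compatibility of the two Cartan embeddings through $\psi\,:\,\mathrm{End}(W)\,\longrightarrow\,\mathrm{End}(V)$, $\phi\,\longmapsto\, p_V\circ\phi\circ i_V$, is the same as the lines $\{\ell_t^x\}$ defining $Y$ being subordinate to the decomposition of $V_x$ defining $Z$. Once this equivalence is in hand, mutual inversity is formal: from $Z$ one recovers $V\,=\,h_*\mathcal{O}_Z$, and the uniqueness built into Theorem \ref{thm1} recovers $Z$ from $V$ up to the canonical $X$-isomorphism; conversely, applying $h_*$ to $\mathcal{O}_Z$ for the $Z$ reconstructed from $V$ returns $V$. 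I would close by observing that this argument is exactly the ``retracing of steps'' through the total-space diagram already displayed before the statement.
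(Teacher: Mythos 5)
Your proposal follows the paper's own argument essentially step for step: the forward direction takes $V=h_*\mathcal{O}_Z$ with the Cartan property supplied by Lemma \ref{lem1}, and the converse runs the total-space picture $\mathbb{W}^*\longrightarrow\mathbb{V}^*$ back through Proposition \ref{prop1} and Corollary \ref{cor2}, which is exactly the paper's ``retracing of steps.'' The equivalence you single out as the main obstacle --- commutativity of the fiber square versus the $Y$-functionals restricting to the $Z$-functionals --- is also the crux in the paper, which disposes of it with the same one-line identification you sketch, so your treatment is correct and at the same level of detail as the published proof.
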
 

\section*{Acknowledgements}

We thank the referee for helpful comments to improve the exposition. The first author is partially supported by Fondecyt Grant 3150171 and CONICYT PIA ACT1415. The second author 
wishes to thank the Universidad de Chile for hospitality while the work was carried 
out. He is partially supported by a J. C. Bose Fellowship.



\begin{thebibliography}{AAAA}

\bibitem[At]{At} M. F. Atiyah, Vector bundles over an elliptic curve, \textit{Proc. 
London Math. Soc.} \textbf{7} (1957) 414--452.

\bibitem[BB]{BB} U. N. Bhosle and I. Biswas, Notes on vector bundles on curves,
\textit{Teichm\"uller theory and moduli problem}, 61--93, Ramanujan Math. Soc.
Lect. Notes Ser., 10, Ramanujan Math. Soc., Mysore, 2010.

\bibitem[Bi1]{Bi1} I. Biswas, Parabolic bundles as
orbifold bundles, \textit{Duke Math. Jour.}
\textbf{88} (1997) 305--325.

\bibitem[Bi2]{Bi2} I. Biswas, Parabolic ample bundles,
\textit{Math. Ann.} \textbf{307} (1997) 511--529.

\bibitem[Bo1]{Bo1} N. Borne, Fibr\'es paraboliques et champ des racines,
\textit{Int. Math. Res. Not.} (2007), no. 16, Art. ID rnm049.

\bibitem[Bo2]{Bo2} N. Borne, Sur les repr\'esentations du groupe
fondamental d'une vari\'et\'e prive\'e d'un diviseur \`a croisements
normaux simples, \textit{Indiana Univ. Math. Jour.} \textbf{58} (2009),
137--180.

\bibitem[Ca]{Ca} C. Cadman, Using stacks to impose tangency conditions on curves, {\it Amer.
Jour. Math.} {\bf 129} (2007), 405--427.

\bibitem[DP]{DP} A. Deopurkar and A. Patel, Vector bundles and finite covers,
arXiv:1608.01711 [math.AG].

\bibitem[MY]{MY} M. Maruyama and K. Yokogawa, Moduli of
parabolic stable sheaves, \textit{Math. Ann.} \textbf{293}
(1992) 77--99.

\bibitem[MS]{MS} V. B. Mehta and C. S. Seshadri, Moduli of vector bundles on curves
with parabolic structure, {\it Math. Ann.} \textbf{248} (1980), 205--239.

\bibitem[Yo]{Yo} K. Yokogawa, Infinitesimal deformations of
parabolic Higgs sheaves, \textit{Int. Jour. Math.}
\textbf{6} (1995) 125--148.

\end{thebibliography}
\end{document}